\newtheorem{theorem}{Theorem}
\newtheorem{corollary}[theorem]{Corollary}
\newtheorem{lemma}[theorem]{Lemma}
\newtheorem{proposition}[theorem]{Proposition}
\newtheorem{remark}[theorem]{Remark}
\newenvironment{proof}[1][Proof]{\noindent\textbf{#1.} }{\ \rule{0.5em}{0.5em}}
\begin{document}

\title{Self-repelling diffusions via an infinite dimensional approach}
\author{Michel Bena\"{\i}m, Ioana Ciotir, Carl-Erik Gauthier\thanks{We acknowledge financial support from the Swiss National Foundation Grant FN
$200020\_ 149871/1$.} }
\maketitle

\begin{abstract}
In the present work we study self-interacting diffusions following an
infinite dimensional approach. First we prove existence and uniqueness of a
solution with Markov property. Then we study the corresponding transition
semigroup and, more precisely, we prove that it has Feller property and we
give an explicit form of an invariant probability of the system.
\end{abstract}

\textbf{Keywords:} reinforced process, self-interacting diffusions, stochastic equations in Banach spaces, Feller property, invariant probability measure

MSC 60K35, 60H10, 60H30

\section{Introduction}
\label{intro}
In the present work we are interested in stochastic differential equations
of the type 
\begin{equation}
X_{t}=x+\int_{0}^{t}g\left( X_{s}\right)
ds-\int_{0}^{t}\int_{0}^{s}f^{\prime }\left( X_{s}-X_{r}\right) drds+\beta
_{t}  \label{ecu}
\end{equation}%
where $x$$\in $$\mathbb{R}$, $\beta _{t}$ is a standard 1D Brownian motion
and $f$ is a $2\pi $- periodic function with sufficient regularity. The
initial drift profile $g$ shall be chosen in a convenient form detailed
below, in order to assure the Markov property of the process.

The motivating example of this equation comes from physics, and more
precisely from systems that model the shape of a growing polymer.

A first model was introduced in the framework of random walks by Coppersmith
and Diaconis in \cite{cop.diac} and intensively studied later (see \cite{benaim}, \cite{davis}, \cite{permantle} ). The continuous time
corresponding processes were also studied under different assumptions on $f$.

One of the first papers was published by Norris, Rogers and Williams in 1987
and gives a Brownian model with local time drift for self-avoiding random
walk, i.e., 
\begin{equation*}
X_{t}=\beta _{t}-\int_{0}^{t}g\left( X_{s},L\left( s,X_{s}\right) \right) ds
\end{equation*}
where $\left\{ L\left( t,x\right) ;\,t\geqslant 0,\,x\in \mathbb{R}\right\} $
is the local time process of $X$. The main difficulty in this approach is
the lack of Markov property (see \cite{NRW}).

In 1992 Durrett and Rogers studied asymptotic behavior of Brownian polymers.
More precisely they are interested in processes of the form 
\begin{equation*}
X_{t}=\beta _{t}+\int_{0}^{t}\int_{0}^{s}f\left( X_{s}-X_{r}\right) drds
\end{equation*}%
where $f(x)=\Psi \left( x\right) x/\left\Vert x\right\Vert, \Psi(x)\geqslant 0$ (see \cite{durrett}).

An extended study was also made by Bena\"{\i}m, Ledoux, Raimond in the
series of papers on self interacting diffusions (see \cite{benaim1}, \cite{benaim2}, \cite{benaim3}).

In a recent paper, Tarr\`es, T\'oth and Valk\'o proved that a smeared-out version
of the local time function from the point of view of the actual position of
the process is Markov (see \cite{TTV})

In the present work we study equation (\ref{ecu}) following an infinite
dimensional approach. In fact we show that, by choosing a particular form
for the initial drift profile $g$ and by taking the Fourier development of
the function $f$, the stochastic differential equation becomes equivalent to
a system in $\mathbb{R}\times l^{2}\times l^{2}$. Consequently, the problem
can be treated by using tools from the theory of stochastic differential
equations in infinite dimensions and we show existence and uniqueness of the
solution with Markov property.

Then we prove Feller property for the transition semigroup and we show that
the system has an invariant probability measure which is explicitly given.

In the sequel, we denote by $C([0,\infty];H)$ the space of continuous functions from $[0,\infty]$ to the Hilbert space $H$, by $C_{b}^{k}(H)$ the space of bounded functions from $H$ to $\mathbb{R}$ that are $k$ times continuously Fr\'echet differentiable with bounded derivatives up to order $k$, and by $L_{loc}^{\infty}(0,\infty;H)$ the space of functions from $(0,\infty)$ to $H$ which are locally $L^{\infty}.$

\section{Equivalence with an infinite dimensional system}

Consider the stochastic differential equation 
\begin{equation}
X_{t}=x+\int_{0}^{t}g\left(X_{s}\right)ds-\int_{0}^{t}\int_{0}^{s}f^{\prime
}\left(X_{s}-X_{r}\right)drds+\beta_{t}  \label{ecu.init}
\end{equation}
for $x\in\mathbb{R}$ and $\beta_{t}$ a standard 1D Brownian motion.

We assume that $f$ is an even, $2\pi $ periodical function and sufficiently
regular such that the coefficients $\left( a_{n}\right) _{n}$ of the
corresponding Fourier series 
\begin{equation}
f\left( x\right) \thicksim \frac{a_{0}}{2}+\sum_{n=1}^{\infty }a_{n}\cos
\left( nx\right)  \label{fourier}
\end{equation}
form a positive rapidly decreasing sequence and $a_{n}>0,$ for all $n\in 
\mathbb{N}$. For reader's convenience, we recall the definition of the space
of rapidly decreasing sequences of order $k$
\begin{equation}
O^{k}=\left\{ \left( a_{n}\right) _{n};\,\sum_{n=1}^{\infty }\left(
1+n^{2}\right) ^{k}a_{n}^{2}<\infty \right\} .  \label{space}
\end{equation}
In our case $\left( a_{n}\right) _{n}$ is assumed to belong at least to $
O^{5}$ and for that it is sufficient to have $f$ in the Sobolev space $H_{2\pi }^{5}\left( 
\mathbb{R}\right) $ of $2\pi$ periodic functions.

We choose an initial drift profile $g$ of the form 
\begin{equation}
g\left( x\right) =\sum_{n}a_{n}^{1/2}n\left( u_{0}^{n}\sin \left( nx\right)
+v_{0}^{n}\cos \left( nx\right) \right) ,  \label{drift}
\end{equation}
where $\left( u_{0}^{n}\right) _{n}$ and $\left( v_{0}^{n}\right) _{n}$ are
two arbitrary sequences from $l^{2}$.

Since $f^{\prime}$ and $g$ are both $2\pi$-periodic, $(X_{t})_{t\geqslant 0}$ might be interpreted as an angle. Consequently $X_{t}$ could be identified to the point $(\cos (X_{t}),\sin(X_{t}))\in \mathbb{S}^{1}$. For more details see for example \cite{DKMS12}.

By standard computation we see that 
\begin{eqnarray*}
-f^{\prime }\left( X_{s}-X_{r}\right) &=&\sum_{n}a_{n}^{1/2}n\,\sin \left(
nX_{s}\right) \left( a_{n}^{1/2}\cos \left( nX_{r}\right) \right) \\
&&-\sum_{n}a_{n}^{1/2}n\,\cos \left( nX_{s}\right) \left( a_{n}^{1/2}\sin
\left( nX_{r}\right) \right).
\end{eqnarray*}

If we replace\emph{\ }(\ref{fourier}) and (\ref{drift}) in (\ref{ecu.init})
and set 
\begin{equation*}
u_{t}^{n}=u_{0}^{n}+a_{n}^{1/2}\int_{0}^{t}\cos \left( nX_{s}\right) ds
\end{equation*}
\begin{equation*}
v_{t}^{n}=v_{0}^{n}-a_{n}^{1/2}\int_{0}^{t}\sin \left( nX_{s}\right) ds
\end{equation*}
we can rewrite equation (\ref{ecu.init}) as a system in the Hilbert space $H=\mathbb{R}\times l^{2}\times l^{2}$ as
\begin{equation*}
\left\{ 
\begin{array}{l}
X_{t}=x+\int_{0}^{t}\sum_{n}n\left( a_{n}^{1/2}\sin \left( nX_{s}\right)
u_{s}^{n}+a_{n}^{1/2}\cos \left( nX_{s}\right) v_{s}^{n}\right) ds+\beta
_{t},\medskip \\ 
u_{t}^{n}=u_{0}^{n}+a_{n}^{1/2}\int_{0}^{t}\cos \left( nX_{s}\right)
ds,\quad n\geq 1,\medskip \\ 
v_{t}^{n}=v_{0}^{n}-a_{n}^{1/2}\int_{0}^{t}\sin \left( nX_{s}\right)
ds,\quad n\geq 1,
\end{array}
\right.
\end{equation*}
or equivalently as a stochastic differential equation in a Hilbert space 
\begin{equation*}
Y_{t}=y+\int_{0}^{t}F\left( Y_{s}\right) ds+\sigma dW_{t}
\end{equation*}
where the process
\begin{equation*}
Y_{t}=\left( X_{t},~\left( u_{t}^{n}\right) _{n},~\left( v_{t}^{n}\right)
_{n}\right) \in H
\end{equation*}
and the operator $F:H\rightarrow H$ is defined by
\begin{eqnarray}
&&F\left( 
\begin{array}{c}
x\medskip \\ 
\left( u^{n}\right) _{n}\medskip \\ 
\left( v^{n}\right) _{n}
\end{array}
\right) \medskip \medskip  \label{operatorF} \\
&=&\left( 
\begin{array}{c}
\left\langle \left( a_{n}^{1/2}n\sin \left( nx\right) \right)
_{n},\left( u^{n}\right) _{n}\right\rangle _{l^{2}}+\left\langle \left(
a_{n}^{1/2}n\cos \left( nx\right) \right) _{n},\left( v^{n}\right)
_{n}\right\rangle _{l^{2}}\medskip \\ 
\left( a_{n}^{1/2}\cos \left( nx\right) \right) _{n}\medskip \\ 
-\left( a_{n}^{1/2}\sin \left( nx\right) \right) _{n}
\end{array}
\right)  \notag
\end{eqnarray}
and $W_{t}$ is a cylindrical Wiener process with values in $H$ and the noise 
$\sigma =\left( 1,~0,~0\right) $ is the \textit{projection on the first coordinate}.

The hypotheses from this section are assumed for the rest of the paper. We
shall denote by $C$ a positive constant which might change from line to line.

\section{Existence and uniqueness of the solution for the infinite dimensional equation}

We consider the equation from the previous section%
\begin{equation}
\left\{ 
\begin{array}{l}
dY_{t}=F\left( Y_{t}\right) dt+\sigma dW_{t}\medskip \\ 
Y_{0}=y
\end{array}
\right.  \label{sist}
\end{equation}
for an initial condition $y\in \mathbb{R\times }l^{2}\times l^{2}$ and $F$
defined in (\ref{operatorF}).

We can now formulate the existence result.

\bigskip

\begin{proposition}
\label{exist}Under the assumptions presented above, for each $y\in H,$ there
is a unique analytically strong solution 
\begin{equation*}
Y\in C\left( \left[ 0,\infty \right) ;H\right) \cap L_{loc}^{\infty }\left(
0,\infty ;~H\right)
\end{equation*}
to equation (\ref{sist}).

Moreover, for $T<\infty ,$ we have that
\begin{equation*}
\mathbb{E}\left( \underset{t\in \left[ 0,T\right] }{\sup }\left\vert
Y_{t}\right\vert _{H}^{2}\right) <\infty .
\end{equation*}
\end{proposition}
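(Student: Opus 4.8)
The plan is to verify that the drift $F$ defined in (\ref{operatorF}) satisfies the hypotheses of the standard existence-and-uniqueness theory for semilinear (here in fact additive-noise) SDEs in Hilbert space — namely local Lipschitz continuity plus a linear-growth / monotonicity-type bound — and then to upgrade the a priori estimate to the uniform-in-time $L^2$ bound claimed at the end. The first observation is that $F$ involves \emph{no} unbounded operator: the noise coefficient $\sigma$ is a bounded (rank-one) operator and the equation is genuinely an SDE in $H$ with bounded diffusion, so the Ornstein--Uhlenbeck/stochastic-convolution machinery is not needed and ``analytically strong solution'' just means a process satisfying the integral version of (\ref{sist}). So the whole question reduces to regularity and growth of $F:H\to H$.

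First I would prove that $F$ is locally Lipschitz on $H$. Write $Y=(x,(u^n),(v^n))$ and $\tilde Y=(\tilde x,(\tilde u^n),(\tilde v^n))$. The second and third components of $F$ depend only on $x$ through $(a_n^{1/2}\cos(nx))_n$ and $(a_n^{1/2}\sin(nx))_n$; since $|\cos(nx)-\cos(n\tilde x)|\le n|x-\tilde x|$ and $(a_n^{1/2})_n$ decays fast, $\sum_n a_n n^2<\infty$ (this uses $(a_n)\in O^5$, indeed $O^2$ suffices here), so these components are globally Lipschitz in $x$ with an $l^2$-valued Lipschitz bound. The first component is a bilinear-type pairing $\langle (a_n^{1/2}n\sin(nx))_n,(u^n)_n\rangle + \langle(a_n^{1/2}n\cos(nx))_n,(v^n)_n\rangle$; I would bound the difference by splitting in the usual bilinear way — freeze $x$ in one factor and freeze $(u^n),(v^n)$ in the other — using $\sum_n a_n n^2<\infty$ to control $\|(a_n^{1/2}n\sin(nx))_n\|_{l^2}$ and $\sum_n a_n n^4<\infty$ (here is where $O^k$ for larger $k$, and the margin up to $O^5$, is comfortable) to control the $x$-difference of that vector. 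This yields $|F(Y)-F(\tilde Y)|_H\le C(1+|Y|_H+|\tilde Y|_H)|Y-\tilde Y|_H$, i.e. $F$ is locally Lipschitz, and simultaneously (taking $\tilde Y=0$) the linear growth bound $|F(Y)|_H\le C(1+|Y|_H)$. With these two properties, the classical theorem (e.g. Da Prato--Zabczyk, or the Banach-space results the paper advertises) gives a unique local mild solution; linear growth then precludes explosion and gives the global solution in $C([0,\infty);H)$, and the Markov property is the standard consequence of strong uniqueness for an autonomous equation. Membership in $L^\infty_{loc}(0,\infty;H)$ is immediate from path-continuity.

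For the final moment bound, I would apply Itô's formula to $|Y_t|_H^2$ (valid since the noise is a nice bounded operator), obtaining $|Y_t|_H^2 = |y|_H^2 + 2\int_0^t\langle Y_s,F(Y_s)\rangle\,ds + 2\int_0^t\langle Y_s,\sigma\,dW_s\rangle + \mathrm{Tr}(\sigma\sigma^*)\,t$; using $\langle Y_s,F(Y_s)\rangle\le C(1+|Y_s|_H^2)$ from linear growth, taking $\sup_{t\le T}$, and estimating the martingale term by Burkholder--Davis--Gundy and Young's inequality, Grönwall closes the estimate to give $\mathbb{E}(\sup_{t\in[0,T]}|Y_t|_H^2)\le C(T,|y|_H)<\infty$. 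I expect the main obstacle to be purely bookkeeping rather than conceptual: keeping track of \emph{which} weighted $l^2$ norm of the sequences $(a_n^{1/2}n^j)_n$ is finite at each step of the bilinear Lipschitz estimate, and making sure the constant in the local Lipschitz bound depends on $|Y|_H,|\tilde Y|_H$ in a way that is still compatible with the no-explosion argument — but since $(a_n)\in O^5$ gives several spare derivatives, none of these sums is in danger of diverging, so the estimates go through with room to spare.
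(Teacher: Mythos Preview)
Your proposal is correct and follows essentially the same route as the paper: local Lipschitz continuity of $F$ via the bilinear splitting you describe, linear growth (the paper phrases the non-explosion hypothesis as the subdifferential condition c) of Theorem~7.10 in Da~Prato--Zabczyk, but derives it immediately from the same Lipschitz and growth estimates), and then the moment bound via It\^o's formula on $\|Y_t\|_H^2$, BDG on the martingale term, and Gronwall. Even the bookkeeping you anticipate---tracking which weighted $l^2$ norms $\sum_n a_n n^{2j}$ are needed---matches the paper's computation of the terms $T_1,T_2,T_3$ almost line for line.
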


\begin{proof}
We study equation (\ref{sist}) in the framework of the analytic approach of
stochastic differential equations in Banach spaces, and more precisely in
the space $H=\mathbb{R}\times l^{2}\times l^{2}$ equipped with the norm 
\begin{equation*}
\left\Vert y\right\Vert _{H}^{2}=\left\vert x\right\vert ^{2}+\left\Vert
\left( u_{n}\right) _{n}\right\Vert _{l^{2}}^{2}+\left\Vert \left(
v_{n}\right) _{n}\right\Vert _{l^{2}}^{2},\medskip
\end{equation*}
for all $y=\left( x,\left( u_{n}\right) _{n},\left( v_{n}\right) _{n}\right)
\in \mathbb{R\times }l^{2}\times l^{2}$.

Since the operator $F$ defined before is not Lipschitz in $H,$ we may use
Theorem 7.10 from page 198 of \cite{DPZ1} in order to get existence of the
solution to equation (\ref{sist}).

More precisely, we shall prove that the following three conditions are
satisfied for the operator $F$ defined in (\ref{operatorF})

\begin{itemize}
\item[a)] $F$ is locally Lipschitz continuous in $H$

\item[b)] $F$ is bounded on bounded subsets of{\large ~}$H$

\item[c)] there exists an increasing function 
\begin{equation*}
a:\mathbb{R}_{+}\rightarrow \mathbb{R}_{+},
\end{equation*}
such that 
\begin{equation*}
\left\langle F\left( y+\widetilde{y}\right) ,~y^{\ast }\right\rangle \leq
a\left( \left\Vert \widetilde{y}\right\Vert _{H}\right) \left( 1+\left\Vert
y\right\Vert _{H}\right) \medskip
\end{equation*}
for all $y,~\widetilde{y}\in H$\ and $y^{\ast }\in \partial \left\Vert
y\right\Vert $, where $\left\langle .,.\right\rangle $ is the duality form
on $H$ and $\partial \left\Vert .\right\Vert $ is the subdifferential of the 
$H$ norm.
\end{itemize}

We shall first prove a).

Indeed, for all $y$ and $\widetilde{y}$\ from $H$ we have that
\begin{equation*}
\left\Vert F\left( y\right) -F\left( \widetilde{y}\right) \right\Vert
_{H}^{2}\medskip 
\end{equation*}
\begin{equation*}
=\left\Vert \left( 
\begin{array}{c}
\sum\limits_{n}a_{n}^{1/2}n\left( \sin \left( n x\right) u_{n}+\cos
\left( nx\right) v_{n}-\sin \left( n\widetilde{x}\right) 
\widetilde{u}_{n}-\cos \left( n\widetilde{x}\right) \widetilde{v}
_{n}\right) \medskip  \\ 
\left( a_{n}^{1/2}\cos \left( n x\right) \right) _{n}-\left(
a_{n}^{1/2}\cos \left( n\widetilde{x}\right) \right) _{n}\medskip  \\ 
-\left( a_{n}^{1/2}\sin \left( nx\right) \right) _{n}+\left(
a_{n}^{1/2}\sin \left( n\widetilde{x}\right) \right) _{n}\medskip 
\end{array}
\right) \right\Vert _{H}^{2}
\end{equation*}
\begin{eqnarray*}
&=&\left\vert \sum\limits_{n}a_{n}^{1/2}n\left( \sin \left( nx\right)
u_{n}+\cos \left( nx\right) v_{n}-\sin \left( n\widetilde{x}
\right) \widetilde{u}_{n}-\cos \left( n\widetilde{x}\right) 
\widetilde{v}_{n}\right) \right\vert ^{2}
\begin{array}{c}
\medskip \medskip 
\end{array}
\\
&&\quad \quad \quad +\left\Vert \left( a_{n}^{1/2}\cos \left( nx\right)
\right) _{n}-\left( a_{n}^{1/2}\cos \left( n\widetilde{x}\right) \right)
_{n}\right\Vert _{l^{2}}^{2}
\begin{array}{c}
\medskip \medskip 
\end{array}
\\
&&\quad \quad \quad +\left\Vert \left( a_{n}^{1/2}\sin \left( nx\right)
\right) _{n}-\left( a_{n}^{1/2}\sin \left( n\widetilde{x}\right) \right)
_{n}\right\Vert _{l^{2}}^{2}
\begin{array}{c}
\medskip \medskip 
\end{array}
\end{eqnarray*}
\begin{equation}
\overset{Denote}{=}T_{1}+T_{2}+T_{3}.\medskip \quad \quad \quad \quad \quad
\quad \quad \quad \quad \quad \quad \quad \quad \quad \quad \quad \quad
\quad \quad \quad \quad \quad \quad \quad   \label{loc}
\end{equation}

For the first term we see that 
\begin{eqnarray*}
T_{1} &\leq &2\left\vert \sum\limits_{n}a_{n}^{1/2}n\left( \sin \left(
nx\right) u_{n}-\sin \left( n\widetilde{x}\right) \widetilde{u}
_{n}\right) \right\vert ^{2}
\begin{array}{c}
\medskip \medskip\medskip 
\end{array}
\\
&&+2\left\vert \sum\limits_{n}a_{n}^{1/2}n\left( \cos \left( nx\right)
v_{n}-\cos \left( n\widetilde{x}\right) \widetilde{v}_{n}\right)
\right\vert ^{2}\medskip 
\end{eqnarray*}
\begin{equation*}
\leq 4\left\vert \sum\limits_{n}a_{n}^{1/2}n\sin \left( nx\right)
\left( u_{n}-\widetilde{u}_{n}\right) \right\vert ^{2}+4\left\vert
\sum\limits_{n}a_{n}^{1/2}n\left( \sin \left( nx\right) -\sin \left( n
\widetilde{x}\right) \right) \widetilde{u}_{n}\right\vert
^{2}\medskip 
\end{equation*}
\begin{equation*}
+4\left\vert \sum\limits_{n}a_{n}^{1/2}n\cos \left( nx\right) \left(
v_{n}-\widetilde{v}_{n}\right) \right\vert ^{2}+4\left\vert
\sum\limits_{n}a_{n}^{1/2}n\left( \cos \left( nx\right) -\cos \left( n%
\widetilde{x}\right) \right) \widetilde{v}_{n}\right\vert
^{2}\medskip 
\end{equation*}
and then, by the Cauchy-Schwarz inequality for the inner product in $l^{2}$ and
taking into account that $\left( a_{n}\right) _{n}\in O^{5}$, we obtain that 
\begin{eqnarray*}
T_{1} &\leq &C\left\Vert \left( a_{n}^{1/2}n\sin \left( nx\right)
\right) _{n}\right\Vert _{l^{2}}^{2}\left\Vert \left( u_{n}\right)
_{n}-\left( \widetilde{u}_{n}\right) _{n}\right\Vert _{l^{2}}^{2}
\begin{array}{c}
\medskip \medskip \medskip 
\end{array}
\\
&&+C\left\Vert \left( a_{n}^{1/2}n\cos \left( nx\right) \right)
_{n}\right\Vert _{l^{2}}^{2}\left\Vert \left( v_{n}\right) _{n}-\left( 
\widetilde{v}_{n}\right) _{n}\right\Vert _{l^{2}}^{2}%
\begin{array}{c}
\medskip \medskip \medskip
\end{array}
\\
&&+\left( (\sum\limits_{n}a_{n}^{1/2}n^{2}\left\vert\widetilde{u}
_{n}\right\vert) ^{2}+(\sum\limits_{n}a_{n}^{1/2}n^{2}
\left\vert\widetilde{v}_{n}\right\vert )^{2}\right) \left\vert x-\widetilde{x}
\right\vert ^{2}
\begin{array}{c}
\medskip \medskip \medskip
\end{array}
\\
&\leq &C\left\Vert \left( u_{n}\right) _{n}-\left( \widetilde{u}
_{n}\right) _{n}\right\Vert _{l^{2}}^{2}+C\left\Vert \left(
v_{n}\right) _{n}-\left( \widetilde{v}_{n}\right) _{n}\right\Vert
_{l^{2}}^{2}
\begin{array}{c}
\medskip \medskip \medskip
\end{array}
\\
&&+C\left( \left\Vert \left( \widetilde{u}_{n}\right) \right\Vert
_{l^{2}}^{2}+\left\Vert \left( \widetilde{v}_{n}\right) \right\Vert
_{l^{2}}^{2}\right) \left\vert x-\widetilde{x}\right\vert ^{2}
\begin{array}{c}
\medskip \medskip \medskip
\end{array}
\\
&\leq &C(1+\left\Vert \left( \widetilde{u}_{n}\right) \right\Vert
_{l^{2}}^{2}+\left\Vert \left( \widetilde{v}_{n}\right) \right\Vert
_{l^{2}}^{2})
\begin{array}{c}
\medskip \medskip \medskip
\end{array}
\\
&&\times \left( \left\vert x-\widetilde{x}\right\vert
^{2}+\left\Vert \left( u_{n}\right) _{n}-\left( \widetilde{u}%
_{n}\right) _{n}\right\Vert _{l^{2}}^{2}+\left\Vert \left(
v_{n}\right) _{n}-\left( \widetilde{v}_{n}\right) _{n}\right\Vert
_{l^{2}}^{2}\right),
\begin{array}{c}
\medskip  
\end{array}
\end{eqnarray*}
which leads to 
\begin{equation*}
T_{1}\leq C(1+\left\Vert \left( \widetilde{u}_{n}\right) \right\Vert
_{l^{2}}^{2}+\left\Vert \left( \widetilde{v}_{n}\right) \right\Vert
_{l^{2}}^{2})\left\Vert y-\widetilde{y}\right\Vert _{H}^{2}%
\begin{array}{c}
\medskip \medskip \medskip 
\end{array}
\end{equation*}
where $C$ is a positive constant depending on $\left( a_{n}\right) _{n}$
which might change from line to line.

Keeping in mind that $\left( a_{n}\right) _{n}\in O^{5},$ we can easily see
that the second and the third term verify
\begin{eqnarray*}
T_{2} &=&\sum_{n}\left\vert a_{n}^{1/2}\left( \cos \left( nx\right)
-\cos \left( n\widetilde{x}\right) \right) \right\vert ^{2}%
\begin{array}{c}
\medskip \medskip \medskip
\end{array}
\\
&\leq &\sum_{n}\left\vert a_{n}^{1/2}n\left( x-\widetilde{x}\right)
\right\vert ^{2}
\begin{array}{c}
\medskip \medskip \medskip
\end{array}
\\
&\leq &\sum_{n}a_{n}n^{2}\left\vert x-\widetilde{x}\right\vert ^{2}
\begin{array}{c}
\medskip \medskip \medskip
\end{array}
\\
&\leq &C\left\vert x-\widetilde{x}\right\vert ^{2}
\end{eqnarray*}
and, by a similar argument, 
\begin{equation*}
T_{3}\leq C\left\vert x-\widetilde{x}\right\vert ^{2}.
\end{equation*}

Going back to (\ref{loc}) we obtain that
\begin{eqnarray}\label{constantC}
\left\Vert F\left( y\right) -F\left( \widetilde{y}\right) \right\Vert
_{H}^{2} &\leq &C(1+\left\Vert \left( \widetilde{u}_{n}\right)
\right\Vert _{l^{2}}^{2}+\left\Vert \left( \widetilde{v}_{n}\right)
\right\Vert _{l^{2}}^{2})\left\Vert y-\widetilde{y}\right\Vert _{H}^{2}
\begin{array}{c}
\medskip \medskip \medskip
\end{array}
\label{estimare} \\
&\leq &C\left( 1+\left\Vert \widetilde{y}\right\Vert _{H}^{2}\right)
\left\Vert y-\widetilde{y}\right\Vert _{H}^{2}
\begin{array}{c}
\medskip \medskip
\end{array}
\notag
\end{eqnarray}
where $C$ is a positive constant depending on $\left( a_{n}\right) _{n}$.

Consequently, for all $y,\widetilde{y}\in B\left( 0,R\right) $ we obtain that
\begin{eqnarray}
\left\Vert F\left( y\right) -F\left( \widetilde{y}\right) \right\Vert _{H}
&\leq &C\sqrt{(1+\left\Vert \left( \widetilde{u}_{n}\right) \right\Vert
_{l^{2}}^{2}+\left\Vert \left( \widetilde{v}_{n}\right) \right\Vert
_{l^{2}}^{2})}\left\Vert y-\widetilde{y}\right\Vert _{H}%
\begin{array}{c}
\medskip \medskip 
\end{array}
\label{lipschitz} \\
&\leq &\sqrt{C\left( 1+\left\Vert \widetilde{y}\right\Vert _{H}^{2}\right) }
\left\Vert y-\widetilde{y}\right\Vert _{H}
\begin{array}{c}
\medskip \medskip
\end{array}
\notag \\
&\leq &C\left( R,\left( a_{n}\right) _{n}\right) \left\Vert y-\widetilde{y}
\right\Vert _{H}
\begin{array}{c}
\medskip \medskip
\end{array}
\notag
\end{eqnarray}%
\ where $C\left( R,\left( a_{n}\right) _{n}\right) $ is a positive constant
depending on $R$ and $\left( a_{n}\right) _{n}$, and the proof of the
locally Lipschitz property is completed.

For the proof of b) it is sufficient to take $\widetilde{y}=0$\ in (\ref{estimare}). We obtain then 
\begin{eqnarray}
\left\Vert F\left( y\right) \right\Vert _{H} &\leq &\left\Vert F\left(
y\right) -F\left( 0\right) \right\Vert _{H}+\left\Vert F\left( 0\right)
\right\Vert _{H}
\begin{array}{c}
\medskip \medskip
\end{array}
\label{sublin} \\
&\leq &C\left\Vert y\right\Vert _{H}+\left\Vert \left( a_{n}^{1/2}\right)
_{n}\right\Vert _{l^{2}}
\begin{array}{c}
\medskip \medskip
\end{array}
\notag \\
&\leq &C\left( \left\Vert y\right\Vert _{H}+1\right) 
\begin{array}{c}
\medskip \medskip
\end{array}
\notag
\end{eqnarray}
where $C$ is a positive constant depending on $\left( a_{n}\right) _{n}$
which might change from line to line. Consequently, $F$ is bounded on
bounded subsets of $H$.

In order to complete the proof of existence, we still have to prove c) and
to this purpose we need to find an increasing function 
\begin{equation*}
a:\mathbb{R}_{+}\rightarrow \mathbb{R}_{+},
\end{equation*}
such that 
\begin{equation*}
\left\langle F\left( y+\widetilde{y}\right) ,~y^{\ast }\right\rangle \leq
a\left( \left\Vert \widetilde{y}\right\Vert _{H}\right) \left( 1+\left\Vert
y\right\Vert _{H}\right) \medskip
\end{equation*}
for all $y,~\widetilde{y}\in H$\ and $y^{\ast }\in \partial \left\Vert
y\right\Vert $.

For that purpose, we consider the function $a(\alpha)=C\left( 1+\alpha
\right) $, where $C$ is the constant from (\ref{constantC}).The constant being positive, the function is clearly increasing on $\mathbb{R}_{+}$. 

Since the subdifferential of the application 
\begin{equation*}
y\rightarrow \frac{1}{2}\left\Vert y\right\Vert _{H}^{2}
\end{equation*}
is the duality mapping of the space $H$, and in our case $H=H^{\ast }$, we
have that
\begin{equation*}
\partial \left\Vert y\right\Vert _{H}=\left\{ 
\begin{array}{l}
\left\{ \dfrac{y}{\left\Vert y\right\Vert _{H}}\right\},\text{ for }y\neq 0
\begin{array}{c}
\medskip \medskip \medskip
\end{array}
\\ 
\left\{ \left\Vert y\right\Vert _{H}\leq 1\right\} ,\text{ for }y=0
\begin{array}{c}
\medskip \medskip \medskip
\end{array}
\end{array}
\right. ,
\end{equation*}
(see page 72 from \cite{DPZ2}).

Since the case $y=0$ is trivial, we only need to prove that 
\begin{equation*}
\left\langle F\left( y+\widetilde{y}\right) ,~\dfrac{y}{\left\Vert
y\right\Vert _{H}}\right\rangle _{H}\leq a\left( \left\Vert \widetilde{y}
\right\Vert _{H}\right) \left( 1+\left\Vert y\right\Vert _{H}\right) .
\begin{array}{c}
\medskip \medskip \medskip
\end{array}
\end{equation*}

Indeed, for all $y=\left( x,\left( u^{n}\right) _{n},\left( v^{n}\right)
_{n}\right) $ and $\widetilde{y}=\left( \widetilde{x},\left( \widetilde{u}
^{n}\right) _{n},\left( \widetilde{v}^{n}\right) _{n}\right) $ in $H$ we
have that 
\begin{eqnarray*}
\left\langle F\left( y+\widetilde{y}\right) ,~\dfrac{y}{\left\Vert
y\right\Vert _{H}}\right\rangle _{H} &\leq &\left\langle F\left( y+
\widetilde{y}\right) -F\left( \widetilde{y}\right) ,~\dfrac{y}{\left\Vert
y\right\Vert _{H}}\right\rangle _{H}
\begin{array}{c}
\medskip \medskip \medskip 
\end{array}
\\
&&+\left\langle F\left( \widetilde{y}\right) ,~\dfrac{y}{\left\Vert
y\right\Vert _{H}}\right\rangle _{H}
\begin{array}{c}
\medskip \medskip \medskip 
\end{array}
\\
&\leq &C\sqrt{\left( 1+\left\Vert \widetilde{y}\right\Vert _{H}^{2}\right) }
\left\Vert y\right\Vert _{H}+C\left( \left\Vert \widetilde{y}\right\Vert
_{H}+1\right) 
\begin{array}{c}
\medskip \medskip \medskip 
\end{array}
\\
&\leq &C\left( 1+\left\Vert \widetilde{y}\right\Vert _{H}\right) \left(
1+\left\Vert y\right\Vert _{H}\right) 
\begin{array}{c}
\medskip \medskip \medskip 
\end{array}
\end{eqnarray*}
where $C$ is a positive constant depending only on $\left( a_{n}\right) _{n}$
which might change from line to line. Hence, we obtain 
\begin{equation*}
\left\langle F\left( y+\widetilde{y}\right) ,~\dfrac{y}{\left\Vert
y\right\Vert }\right\rangle _{H}\leq a\left( \left\Vert \widetilde{y}
\right\Vert _{H}\right) \left( 1+\left\Vert y\right\Vert _{H}\right).
\end{equation*}

We have now existence of an unique mild solution. Since in our case the
generator of $C_{0}$-semigroup is identically zero, a solution is strong if and only if it is mild (see \cite{concise}), so we have also existence and uniqueness of a
strong solution.

Consequently, the proof of existence and uniqueness
is complete.

We shall now prove that
\begin{equation*}
\mathbb{E}\left( \underset{t\in \left[ 0,T\right] }{\sup }\left\Vert
Y_{t}\right\Vert _{H}^{2}\right) <\infty .
\end{equation*}

To this purpose we apply the It\^{o} formula to equation (\ref{sist}) with
the function 
\begin{equation*}
y\mapsto \frac{1}{2}\left\Vert y\right\Vert _{H}^{2}
\end{equation*}
and we get
\begin{eqnarray}
\frac{1}{2}\left\Vert Y\left( t\right) \right\Vert _{H}^{2} &=&\frac{1}{2}
\left\Vert y\right\Vert _{H}^{2}+\int_{0}^{t}\left\langle F\left( Y\left(
s\right) \right) ,Y\left( s\right) \right\rangle _{H}ds
\begin{array}{c}
\medskip \medskip \medskip
\end{array}
\label{estt} \\
&&+\int_{0}^{t}\left\langle Y\left( s\right) ,\sigma dW_{s}\right\rangle _{H}+
\frac{1}{2}\int_{0}^{t}\left\vert \sigma \right\vert ^{2}ds.
\begin{array}{c}
\medskip \medskip \medskip
\end{array}
\notag
\end{eqnarray}

We can easily see that 
\begin{eqnarray*}
\int_{0}^{t}\left\langle Y\left( s\right) ,\sigma dW_{s}\right\rangle _{H}
&=&\int_{0}^{t}X\left( s\right) d\beta_{s}
\begin{array}{c}
\medskip \medskip \medskip
\end{array}
\\
&\leq &\underset{t\in \left[ 0,T\right] }{\sup }\left\vert
\int_{0}^{t}X\left( s\right) d\beta_{s}\right\vert 
\begin{array}{c}
\medskip \medskip \medskip
\end{array}
\end{eqnarray*}
and then, by using the Burkholder-Davis-Gundy inequality, we obtain that 
\begin{equation*}
\mathbb{E}\left( \underset{r\in \left[ 0,t\right] }{\sup }\left\vert
\int_{0}^{r}X\left( s\right) d\beta_{s}\right\vert \right) \leq C\mathbb{E}
\left( \int_{0}^{t}\left\vert X\left( s\right) \right\vert ^{2}ds\right)
^{1/2}
\begin{array}{c}
\medskip \medskip \medskip
\end{array}
\end{equation*}
(see, e.g., \cite{DPZ2} page 58).

On the other hand we see that, by (\ref{sublin}), we get that
\begin{eqnarray*}
\left\langle F\left( Y\left( s\right) \right) ,Y\left( s\right)
\right\rangle _{H} &\leq &\left\Vert F\left( Y\left( s\right) \right)
\right\Vert _{H}\left\Vert Y\left( s\right) \right\Vert _{H}
\begin{array}{c}
\medskip \medskip  
\end{array}
\\
&\leq &C\left( \left\Vert Y\left( s\right) \right\Vert _{H}+1\right)
\left\Vert Y\left( s\right) \right\Vert _{H}%
\begin{array}{c}
\medskip \medskip  
\end{array}
\\
&\leq &C\left( 1+\left\Vert Y\left( s\right) \right\Vert _{H}^{2}\right) ,
\begin{array}{c}
\medskip \medskip 
\end{array}
\end{eqnarray*}
where $C$ is a positive constant depending only on $\left( a_{n}\right) _{n}$
that changes from line to line.

By going back into (\ref{estt}) we obtain via the estimates above that
\begin{equation*}
\mathbb{E}\left( \underset{r\in \left[ 0,t\right] }{\sup }\left\Vert Y\left(
r\right) \right\Vert _{H}^{2}\right) \leq \left\Vert y\right\Vert _{H}^{2}+C
\mathbb{E}\int_{0}^{t}\left( \underset{r\in \left[ 0,s\right] }{\sup }
\left\Vert Y\left( r\right) \right\Vert _{H}^{2}\right) ds+Ct,
\begin{array}{c}
\medskip \medskip 
\end{array}
\end{equation*}
and finally, by Gronwall's lemma we obtain 
\begin{equation*}
\mathbb{E}\left( \underset{t\in \left[ 0,T\right] }{\sup }\left\Vert
Y_{t}\right\Vert _{H}^{2}\right) \leq Ce^{CT}\left( \left\Vert y\right\Vert
_{H}^{2}+T\right) <\infty 
\end{equation*}
and the proof is now complete.
\end{proof}

\begin{remark}
Note that the solution obtained above has the Markov property. For details
see Theorem 9.8 from \cite{DPZ1}.
\end{remark}

\section{The Feller property of the transition semigroup}

We consider the transition semigroup corresponding to the solution $Y\left(
t,y\right) $ defined by
\begin{equation*}
P_{t}\varphi \left( y\right) =\mathbb{E}\left[ \varphi \left( Y\left(
t,y\right) \right) \right] ,
\end{equation*}
for all $\varphi \in B_{b}\left( H\right) $, the space of all bounded and
Borel real functions in $H$, for all$~t\geq 0$ and for all $y\in H$.

We intend to prove that the semigroup has the Feller property which means
that it maps bounded continuous functions into bounded continuous functions.

\begin{proposition}
\label{feller}Let $\left( y_{k}\right) _{k\in \mathbb{N}}$ be a sequence of
initial conditions from $H$ such that $y_{k}\rightarrow y$ in $H$ for $k\rightarrow \infty$. If we denote by 
\begin{equation*}
Y_{k}\left( t\right) =\left(
X_{t}^{k},~\left( u_{t}^{n}\right) _{n}^{k},~\left( v_{t}^{n}\right)
_{n}^{k}\right) 
\end{equation*}
and 
\begin{equation*} 
Y\left( t\right) =\left( X_{t},~\left(u_{t}^{n}\right) _{n},~\left( v_{t}^{n}\right) _{n}\right )
\end{equation*}
the solutions to equation (\ref{sist}) corresponding to every $y_{k}$ and respectively to $y$, then, for any $t>0$, we have that 
\begin{equation*}
\left\Vert Y_{k}\left( t\right) -Y\left( t\right) \right\Vert
_{H}^{2}\rightarrow 0\quad \text{as }k\rightarrow \infty .\medskip
\end{equation*}
In particular we have also that $\left( Y_{t}\right) _{t\geq 0}$ is a Feller process.
\end{proposition}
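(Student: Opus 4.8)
The plan is to use that all the solutions $Y_k$ and $Y$ are driven by the \emph{same} cylindrical Wiener process $W$ (equivalently, by the same one–dimensional Brownian motion, since $\sigma$ is the projection onto the first coordinate), so that the difference $Z_k(t):=Y_k(t)-Y(t)$ satisfies, pathwise, the purely deterministic integral equation
\begin{equation*}
Z_k(t)=(y_k-y)+\int_0^t\bigl(F(Y_k(s))-F(Y(s))\bigr)\,ds,
\end{equation*}
the noise term having cancelled. In particular, for almost every $\omega$, $t\mapsto Z_k(t)$ is an absolutely continuous $H$-valued curve and no stochastic integral survives; applying the It\^o formula to $\tfrac12\|Z_k(t)\|_H^2$ one sees that the martingale part vanishes identically, so the estimate can be carried out by ordinary calculus $\omega$ by $\omega$.

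Next I would estimate the integrand by means of the local Lipschitz bound obtained in the proof of Proposition \ref{exist}, in the form \eqref{estimare}, but with the factor $1+\|\cdot\|_H^2$ attached to the argument $Y(s)$, which does \emph{not} depend on $k$:
\begin{equation*}
\|F(Y_k(s))-F(Y(s))\|_H\le\sqrt{C\bigl(1+\|Y(s)\|_H^2\bigr)}\,\|Z_k(s)\|_H.
\end{equation*}
Setting $M_T(\omega):=\sup_{0\le s\le T}\sqrt{C\bigl(1+\|Y(s,\omega)\|_H^2\bigr)}$, which is finite for almost every $\omega$ because $Y\in C([0,\infty);H)$ by Proposition \ref{exist}, one obtains for every $t\le T$ the pathwise inequality
\begin{equation*}
\|Z_k(t)\|_H\le\|y_k-y\|_H+M_T\int_0^t\|Z_k(s)\|_H\,ds,
\end{equation*}
so that Gronwall's lemma (applied $\omega$ by $\omega$) gives $\|Y_k(t)-Y(t)\|_H^2\le\|y_k-y\|_H^2\,e^{2M_T t}$. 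Since $\|y_k-y\|_H^2\to0$ is deterministic and $M_T<\infty$ almost surely, the right-hand side tends to $0$ almost surely, hence $\|Y_k(t)-Y(t)\|_H^2\to0$ (almost surely, and therefore in probability) as $k\to\infty$, in fact uniformly for $t$ in compact intervals.

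For the last assertion, let $\varphi$ be bounded and continuous on $H$ and let $y_k\to y$ in $H$. By the convergence just proved, $Y_k(t)\to Y(t)$ almost surely, so $\varphi(Y_k(t))\to\varphi(Y(t))$ almost surely while $|\varphi(Y_k(t))|\le\|\varphi\|_\infty$; dominated convergence then yields $P_t\varphi(y_k)=\mathbb E[\varphi(Y_k(t))]\to\mathbb E[\varphi(Y(t))]=P_t\varphi(y)$. Hence $P_t\varphi$ is bounded and continuous, i.e.\ $(P_t)_{t\ge0}$ maps $C_b(H)$ into itself; combined with the Markov property recorded in the remark following Proposition \ref{exist}, this shows that $(Y_t)_{t\ge0}$ is a Feller process.

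The genuine obstacle is that $F$ is only locally Lipschitz, so a direct global Gronwall estimate is unavailable; the textbook remedy would be a stopping-time localization, $\tau_R=\inf\{t:\|Y(t)\|_H\ge R\}$ followed by $R\to\infty$. What makes the shortcut above work is the conjunction of two facts that must be used together: the shared driving noise reduces everything to a pathwise deterministic ODE for $Z_k$, so all estimates may be carried out $\omega$ by $\omega$; and the precise shape of \eqref{estimare} allows the unbounded factor $1+\|\cdot\|_H^2$ to be loaded onto $Y$ rather than onto $Y_k$, whose norm we do not control uniformly in $k$. The point to verify with care is exactly this — that the bound established in Proposition \ref{exist} really is of the form $\|F(y)-F(\widetilde y)\|_H^2\le C(1+\|\widetilde y\|_H^2)\|y-\widetilde y\|_H^2$, so that one is free to take $\widetilde y=Y(s)$.
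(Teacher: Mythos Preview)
Your argument is correct and follows essentially the same route as the paper: both exploit that the noise cancels in $Y_k-Y$, apply the local Lipschitz estimate \eqref{estimare}/\eqref{lipschitz} with the unbounded factor loaded onto the $k$-independent solution $Y$, and close with Gronwall followed by dominated convergence for the Feller property.

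The one difference worth recording is how the Gronwall coefficient is controlled. You use the coarser bound $C(1+\|Y(s)\|_H^2)$ and the pathwise continuity of $Y$ to get a random constant $M_T(\omega)$. The paper instead uses the sharper first line of \eqref{lipschitz}, where the factor is $C\sqrt{1+\|(u^n_s)\|_{l^2}^2+\|(v^n_s)\|_{l^2}^2}$ and depends only on the $(u,v)$-components; it then invokes the explicit a~priori bounds $\|(u^n_t)\|_{l^2}^2,\|(v^n_t)\|_{l^2}^2\le C(y)(1+t)^2$ (which are deterministic, since the $u,v$ equations contain no noise) to obtain a \emph{deterministic} Gronwall constant and the explicit estimate $\|Y_k(t)-Y(t)\|_H^2\le e^{C(y)(t+t^2)}\|y_k-y\|_H^2$. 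Your shortcut is slightly quicker; the paper's version yields a non-random bound, which is marginally stronger and is reused later in the Galerkin convergence lemma.
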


\begin{proof}
We shall check first the following a priori estimates.

Since 
\begin{equation*}
u_{t}^{n}=u_{0}^{n}+a_{n}^{1/2}\int_{0}^{t}\cos \left( nX_{s}\right)
ds\medskip
\end{equation*}
we can easily obtain that
\begin{eqnarray}
\left\Vert \left( u_{t}^{n}\right) _{n}\right\Vert _{l^{2}}^{2} &\leq
&2\left\Vert \left( u_{0}^{n}\right) _{n}\right\Vert
_{l^{2}}^{2}+2t^{2}\left\Vert \left( a_{n}^{1/2}\right) _{n}\right\Vert
_{l^{2}}^{2}
\begin{array}{c}
\medskip \medskip \medskip
\end{array}
\label{est1} \\
&\leq &C\left( \left( u_{0}^{n}\right) _{n}\right) (1+t)^{2},
\begin{array}{c}
\medskip \medskip \medskip
\end{array}
\notag
\end{eqnarray}
where $C\left( \left( u_{0}^{n}\right) _{n}\right) $ is a constant\ which
might change from line to line,\ depending on the initial condition $\left(
u_{0}^{n}\right) _{n}.$

Of course, by the same argument, we get that 
\begin{equation}
\left\Vert \left( v_{t}^{n}\right) _{n}\right\Vert _{l^{2}}^{2}\leq C\left(
\left( v_{0}^{n}\right) _{n}\right) (1+t)^{2}.
\begin{array}{c}
\medskip \medskip 
\end{array}
\label{est2}
\end{equation}

By taking the inner product in $H$\ between the difference%
\begin{equation*}
\frac{d}{dt}\left( Y_{k}\left( t\right) -Y\left( t\right) \right) =F\left(
Y_{k}\left( t\right) \right) -F\left( Y\left( t\right) \right) 
\begin{array}{c}
\medskip \medskip 
\end{array}
\end{equation*}
and $\left( Y_{k}\left( t\right) -Y\left( t\right) \right) $ and keeping in
mind that 
\begin{equation*}
\left\langle \frac{d}{dt}\left( Y_{k}\left( t\right) -Y\left( t\right)
\right) ,\left( Y_{k}\left( t\right) -Y\left( t\right) \right) \right\rangle
_{H}=\frac{d}{dt}\left( \frac{1}{2}\left\Vert \left( Y_{k}\left( t\right)
-Y\left( t\right) \right) \right\Vert _{H}^{2}\right) ,
\end{equation*}
we get that
\begin{eqnarray}
\left\Vert Y_{k}\left( t\right) -Y\left( t\right) \right\Vert _{H}^{2}
&=&\left\Vert y_{k}-y\right\Vert _{H}^{2}
\begin{array}{c}
\medskip \medskip \medskip
\end{array}
\label{ito} \\
&&+2\int_{0}^{t}\left\langle F\left( Y_{k}\left( s\right) \right) -F\left(
Y\left( s\right) \right) ,Y_{k}\left( s\right) -Y\left( s\right)
\right\rangle _{H}ds.
\begin{array}{c}
\medskip \medskip
\end{array}
\notag
\end{eqnarray}

We can see by (\ref{lipschitz}) that%
\begin{eqnarray*}
&&\left\langle F\left( Y_{k}\left( s\right) \right) -F\left( Y\left(
s\right) \right) ,Y_{k}\left( s\right) -Y\left( s\right) \right\rangle _{H}
\begin{array}{c}
\medskip \medskip \medskip
\end{array}
\\
&\leq &\left\Vert F\left( Y_{k}\left( s\right) \right) -F\left( Y\left(
s\right) \right) \right\Vert _{H}\left\Vert Y_{k}\left( s\right) -Y\left(
s\right) \right\Vert _{H}
\begin{array}{c}
\medskip \medskip \medskip
\end{array}
\\
&\leq &C\sqrt{(1+\left\Vert \left( u_{s}^{n}\right) \right\Vert
_{l^{2}}^{2}+\left\Vert \left( v_{s}^{n}\right) \right\Vert _{l^{2}}^{2})}
\left\Vert Y_{k}\left( s\right) -Y\left( s\right) \right\Vert _{H}^{2}
\begin{array}{c}
\medskip \medskip \medskip
\end{array}
\end{eqnarray*}
and then, by (\ref{est1}) and (\ref{est2}) we see that
\begin{eqnarray*}
&&\left\langle F\left( Y_{k}\left( s\right) \right) -F\left( Y\left(
s\right) \right) ,Y_{k}\left( s\right) -Y\left( s\right) \right\rangle _{H}
\begin{array}{c}
\medskip \medskip \medskip
\end{array}
\\
&\leq &C\left( \left( u_{0}^{n}\right) _{n},\left( v_{0}^{n}\right)
_{n}\right) \left( 1+s\right) \left\Vert Y_{k}\left( s\right) -Y\left(
s\right) \right\Vert _{H}^{2},
\begin{array}{c}
\medskip \medskip \medskip
\end{array}
\end{eqnarray*}
where $C$ is a positive constant which might depend on $\left( a_{n}\right)
_{n}$ and also on the initial condition $y=\left( x,\left( u_{0}^{n}\right)
_{n},\left( v_{0}^{n}\right) _{n}\right) $.

Finally, from (\ref{ito}) we have that
\begin{eqnarray*}
&&\left\Vert Y_{k}\left( t\right) -Y\left( t\right) \right\Vert
_{H}^{2}
\begin{array}{c}
\medskip \medskip \medskip 
\end{array}
\\
&=&\left\Vert y_{k}-y\right\Vert _{H}^{2}+2\int_{0}^{t}\left
\langle F\left( Y_{k}\left( s\right) \right) -F\left( Y\left( s\right)
\right) ,Y_{k}\left( s\right) -Y\left( s\right) \right\rangle _{H}ds
\begin{array}{c}
\medskip \medskip \medskip 
\end{array}
\\
&\leq &\left\Vert y_{k}-y\right\Vert _{H}^{2}+C\left( y\right)
\int_{0}^{t}\left( 1+s\right)\left\Vert Y_{k}\left( s\right)
-Y\left( s\right) \right\Vert _{H}^{2}ds
\begin{array}{c}
\medskip \medskip \medskip 
\end{array}
\end{eqnarray*}
where $C$ is a positive constant depending on $\left( a_{n}\right) _{n}$ and
also on the initial condition $y=\left( x,\left( u_{0}^{n}\right)
_{n},\left( v_{0}^{n}\right) _{n}\right) $.

Then, by Gronwall's lemma, we obtain that
\begin{equation*}
\left\Vert Y_{k}\left( t\right) -Y\left( t\right) \right\Vert
_{H}^{2}\leq e^{C\left( y\right) \left( t+t^{2}\right) }\left\Vert
y_{k}-y\right\Vert _{H}^{2}.
\begin{array}{c}
\end{array}
\end{equation*}

Let $\varphi :H\rightarrow \mathbb{R}$ be a bounded and continuous function. Since $L^{2}$ convergence implies a convergence in probability, we then have that
$\varphi (Y_{k}(t))\rightarrow \varphi ( Y(t))$ in probability (see Lemma 3.3 in \cite{Kallenberg}). 


Consequently, 
\begin{equation*}
\underset{k\rightarrow \infty }{\lim }\mathbb{E\varphi }\left( Y_{k}\left(
t\right) \right) =\mathbb{E\varphi }\left( Y\left( t\right) \right) ,\quad 
\text{for any fixed }t>0,
\begin{array}{c}
\medskip \medskip
\end{array}
\end{equation*}
which is actually
\begin{equation*}
\underset{k\rightarrow \infty }{\lim }P_{t}\mathbb{\varphi }\left(
y_{k}\right) =P_{t}\mathbb{\varphi }\left( y\right) ,\quad \text{for any
fixed }t>0,
\begin{array}{c}
\medskip \medskip
\end{array}%
\end{equation*}%
and then we have the proved the Feller property.
\end{proof}

\begin{remark}\label{noStrongFell}
 Let $\mathcal{A}=\mathbb{R}\times O^{1}\times O^{1}$, with $O^{1}$ defined by (\ref{space}). It easily follows from the definition of $(u_{t}^{n})_{n}$ and $(v_{t}^{n})_{n}$ that $$ y\in \mathcal{A} \Leftrightarrow Y(t)\in\mathcal{A} \text{ for all } t\geqslant 0, $$
where $Y(t)$ is the solution of equation (\ref{sist}) with initial condition $Y(0)=y$. This makes $1_{\mathcal{A}}$ invariant under $P_{t}$ (i.e., $P_{t}1_{\mathcal{A}}=1_{\mathcal{A}}$).\\
Hence the process $\left( Y_{t}\right) _{t}$ is not strongly Feller.
\end{remark}

\section{The invariant measure of the transition semigroup}\label{s5}

In this section we shall prove existence of an invariant measure for the 
transition semigroup corresponding to the equation on $\mathbb{S}^{1}\times l^{2}\times l^{2}$
\begin{equation}
\left\{ 
\begin{array}{l}
dY_{t}=F\left( Y_{t}\right) dt+\sigma dW_{t}\medskip \\ 
Y_{0}=y
\end{array}
\right.  \label{ecuatie}
\end{equation}
with initial condition $y\in \mathbb{R\times }l^{2}\times l^{2}$, where $\mathbb{S}^{1}$ is identified to $\mathbb{R}/2\pi\mathbb{Z}$.

A probability $\mu $ on $H$ is said to be an invariant measure for the transition semigroup $
\left( P_{t}\right) _{t}$ iff 
\begin{equation}
\int_{H}P_{t}\mathbb{\varphi }\left( y\right) \mu \left( dy\right) =\int_{H}
\mathbb{\varphi }\left( y\right) \mu \left( dy\right),  \label{inv}
\end{equation}
for all measurable and bounded function $\varphi$.

By standard arguments (see Theorem 1.2, page 8 from \cite{Billing} and relation (1.5) at page 2 of \cite{DP}) it is sufficient that (\ref{inv}) holds for all $\varphi\in C_{b}(H)$.

\bigskip

\textbf{Existence of an invariant measure of the transition semigroup}

\bigskip

We consider the measure
\begin{equation}
\mu \left( dy\right) =\frac{dx}{2\pi }\otimes \prod\limits_{n\geq 1}N\left(
0,\frac{1}{n^{2}}\right) du_{n}\otimes \prod\limits_{n\geq 1}N\left( 0,
\frac{1}{n^{2}}\right) dv_{n}  \label{invar}
\end{equation}
where $N\left( 0,\frac{1}{n^{2}}\right) $\ is the normal distribution. The
form of $\mu $ is inspired from the finite dimensional case (see \cite{CEGMB}).

First, the fact that $\mu $ is a probability measure on H is clearly explained in 
Exercise 2.1.8. from \cite{concise}.

We intend to prove that $\mu $ is an invariant measure of $\left(
P_{t}\right) _{t}$ on $\mathbb{S}^{1}\times l^{2}\times l^{2}$ by using the strong convergence of a Galerkin type
approximation.

To this purpose, we consider that 
\begin{equation*}
H=H_{N}\times l^{2}\times l^{2}
\end{equation*}
where $H_{N}=\mathbb{R}\times \mathbb{R}^{N}\times \mathbb{R}^{N},$ and 
\begin{equation*}
\Pi _{N}:H\rightarrow H_{N}\times \left\{ 0\right\} ^{\infty }\times \left\{
0\right\} ^{\infty }
\end{equation*}
be defined by 
\begin{equation*}
\Pi _{N}\left( x,\left( u_{n}\right) _{n\in \mathbb{N}},\left( v_{n}\right)
_{n\in \mathbb{N}}\right) =\left( x,\left( u_{n}\right) _{n=1}^{N}\times
\left\{ 0\right\} ^{\infty },\left( u_{n}\right) _{n=1}^{N}\times \left\{
0\right\} ^{\infty }\right) .
\end{equation*}

Obviously, the following stochastic equation on $H_{N}$ 
\begin{equation}
\left\{ 
\begin{array}{l}
dY_{t}^{\left( N\right) }=\Pi _{N}\left( F\left( Y_{t}^{\left( N\right)
}\right) \right) dt+\sigma dW_{t}\medskip  \\ 
Y_{0}^{\left( N\right) }=\Pi _{N}y\medskip 
\end{array}
\right.   \label{aprox}
\end{equation}
can be treated by classical results for the solvability of SDE in
finite-dimension. Consequently, equation (\ref{aprox}) has a unique strong
solution.

\bigskip

We can now prove the following preliminary result.

\begin{lemma}\label{CVGE}
Under the assumptions given before, the sequence of solutions $\left(
Y^{\left( N\right) }\right) _{N}$ to equations (\ref{aprox}) converges
strongly in $H$ to the solution $Y$ to equation (\ref{sist}). 
More precisely we have that
\begin{equation*}
\lim_{N\rightarrow \infty}\sup_{0\leqslant t\leqslant T}\left\Vert Y^{\left( N\right) }\left(
t\right) -Y\left( t\right) \right\Vert _{H}^{2}=0,
\end{equation*}
for all $T>0$ and $\omega \in \Omega .$
\end{lemma}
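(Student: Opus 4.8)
The plan is to derive a pathwise differential inequality for the difference $D_N(t):=Y^{(N)}(t)-Y(t)$ and then close it with Gronwall's lemma, exactly in the spirit of the proof of Proposition \ref{feller}. Both $Y^{(N)}$ and $Y$ are driven by the \emph{same} cylindrical Wiener process $W$ with the same noise coefficient $\sigma=(1,0,0)$, so the stochastic integrals cancel in the difference and $D_N$ solves a pathwise ODE in $H$:
\begin{equation*}
\frac{d}{dt}D_N(t)=\Pi_N F\!\left(Y^{(N)}(t)\right)-F\!\left(Y(t)\right).
\end{equation*}
Taking the inner product with $D_N(t)$ and using $\langle \frac{d}{dt}D_N,D_N\rangle_H=\frac12\frac{d}{dt}\|D_N\|_H^2$ gives, after integrating,
\begin{equation*}
\|D_N(t)\|_H^2=\|\Pi_N y-y\|_H^2+2\int_0^t\left\langle \Pi_N F\!\left(Y^{(N)}(s)\right)-F\!\left(Y(s)\right),\,D_N(s)\right\rangle_H ds.
\end{equation*}
The initial term $\|\Pi_N y-y\|_H^2\to 0$ as $N\to\infty$ since $y\in H$ and $\Pi_N$ are the truncation projections.

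The integrand I would split as $\Pi_N F(Y^{(N)}(s))-F(Y(s))=\Pi_N\big(F(Y^{(N)}(s))-F(Y(s))\big)+\big(\Pi_N F(Y(s))-F(Y(s))\big)$. For the first piece, since $\Pi_N$ is a contraction on $H$, Cauchy--Schwarz and the local Lipschitz estimate \eqref{lipschitz} give a bound of the form $C\sqrt{1+\|(u_s^n)\|_{l^2}^2+\|(v_s^n)\|_{l^2}^2}\,\|D_N(s)\|_H^2$, which by the a priori bounds \eqref{est1}--\eqref{est2} is $\leq C(y)(1+s)\|D_N(s)\|_H^2$; here I should also invoke Proposition \ref{exist} to know $\sup_{[0,T]}\|Y(s)\|_H<\infty$ pathwise (or work on the invariant set $\mathcal A$ from Remark \ref{noStrongFell} where the $l^2$-norms of $(u_t^n),(v_t^n)$ stay finite). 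For the second piece, $\|\Pi_N F(Y(s))-F(Y(s))\|_H$ is the $H$-norm of the tail (coordinates $>N$) of $F(Y(s))$, i.e.\ of $(a_n^{1/2}\cos(nX_s))_{n>N}$ and $(a_n^{1/2}\sin(nX_s))_{n>N}$, which is bounded by $\big(\sum_{n>N}a_n\big)^{1/2}$; this is a deterministic sequence $\varepsilon_N\to 0$ because $(a_n)_n\in O^5\subset l^1$. Pairing with $D_N(s)$ and using $2\varepsilon_N\|D_N(s)\|_H\leq \varepsilon_N^2+\varepsilon_N\|D_N(s)\|_H^2$ (or just $\leq \varepsilon_N(1+\|D_N(s)\|_H^2)$) keeps everything in Gronwall form.

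Collecting terms, on $[0,T]$ we obtain
\begin{equation*}
\|D_N(t)\|_H^2\leq \Big(\|\Pi_N y-y\|_H^2+T\varepsilon_N\Big)+C(y,T)\int_0^t\|D_N(s)\|_H^2\,ds,
\end{equation*}
and Gronwall's lemma yields $\sup_{0\leq t\leq T}\|D_N(t)\|_H^2\leq \big(\|\Pi_N y-y\|_H^2+T\varepsilon_N\big)e^{C(y,T)T}\to 0$ as $N\to\infty$, which is the claim. The main obstacle I anticipate is not any single estimate but making the constants genuinely pathwise: the factor $\sqrt{1+\|(u_s^n)\|_{l^2}^2+\|(v_s^n)\|_{l^2}^2}$ must be controlled uniformly in $N$ and $s\in[0,T]$, so one wants the a priori bounds \eqref{est1}--\eqref{est2} applied to $Y$ itself (and the analogous, $N$-uniform bounds for $Y^{(N)}$, which follow identically since $\Pi_N$ is a contraction), giving a constant that depends only on $y$, $T$ and $(a_n)_n$ but not on $N$ or $\omega$ beyond the fixed initial condition — which is exactly the uniformity the statement asserts.
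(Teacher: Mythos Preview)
Your proposal is correct and follows essentially the same route as the paper: derive a pathwise energy identity for $D_N$, split the integrand into a Lipschitz piece (controlled by \eqref{lipschitz} together with the a~priori bounds \eqref{est1}--\eqref{est2} for $Y$) and a tail piece of size $\big(\sum_{n>N}a_n\big)^{1/2}$, and close with Gronwall. The only cosmetic difference is that the paper splits at $F(Y^{(N)}(s))$ rather than at $F(Y(s))$, writing $\Pi_N F(Y^{(N)})-F(Y)=[\Pi_N F(Y^{(N)})-F(Y^{(N)})]+[F(Y^{(N)})-F(Y)]$, so it never needs to invoke that $\Pi_N$ is a contraction; since $Y^{(N)}(s)$ already lives in the range of $\Pi_N$, the first bracket is exactly the tail $(a_n^{1/2}\cos(nX_s^{(N)}))_{n>N}$, $(a_n^{1/2}\sin(nX_s^{(N)}))_{n>N}$, and the second bracket is handled directly by \eqref{lipschitz}. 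Either decomposition yields the same final estimate, and your parenthetical about $\mathcal A$ or pathwise finiteness from Proposition~\ref{exist} is unnecessary: the bounds \eqref{est1}--\eqref{est2} are deterministic and depend only on the initial data $(u_0^n)_n,(v_0^n)_n$ of $Y$.
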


\begin{proof}
By taking the inner product between 
$Y^{\left( N\right) }\left( t\right) -Y\left( t\right)$ 
and the difference 
\begin{equation*}
\frac{d}{dt}\left( Y^{\left( N\right) }\left( t\right) -Y\left( t\right)
\right) =\left( \Pi _{N}F\left( Y^{\left( N\right) }\left( s\right) \right)
-F\left( Y\left( s\right) \right) \right) \medskip 
\end{equation*}
we obtain that
\begin{eqnarray*}
&&\left\Vert Y^{\left( N\right) }\left( t\right) -Y\left( t\right)
\right\Vert _{H}^{2}
\begin{array}{c}
\medskip \medskip \medskip 
\end{array}
\\
&=&\left\Vert \Pi _{N}y-y\right\Vert _{H}^{2}
\begin{array}{c}
\medskip \medskip
\end{array}
\\
&&+2\int_{0}^{t}\left\langle \Pi _{N}F\left( Y^{\left( N\right) }\left(
s\right) \right) -F\left( Y\left( s\right) \right) ,Y^{\left( N\right)
}\left( s\right) -Y\left( s\right) \right\rangle _{H}ds
\begin{array}{c}
\medskip \medskip 
\end{array}
\\
&=&\left\Vert \Pi _{N}y-y\right\Vert _{H}^{2}
\begin{array}{c}
\medskip \medskip 
\end{array}
\\
&&+2\int_{0}^{t}\left\langle \Pi _{N}F\left( Y^{\left( N\right) }\left(
s\right) \right) -F\left( Y^{\left( N\right) }\left( s\right) \right)
,Y^{\left( N\right) }\left( s\right) -Y\left( s\right) \right\rangle _{H}ds
\begin{array}{c}
\medskip \medskip \medskip
\end{array}
\\
&&+2\int_{0}^{t}\left\langle F\left( Y^{\left( N\right) }\left( s\right)
\right) -F\left( Y\left( s\right) \right) ,Y^{\left( N\right) }\left(
s\right) -Y\left( s\right) \right\rangle _{H}ds.
\begin{array}{c}
\medskip \medskip \medskip  
\end{array}
\end{eqnarray*}

We can easily see that 
\begin{eqnarray*}
&&\left\langle \Pi _{N}F\left( Y^{\left( N\right) }\left( s\right) \right)
-F\left( Y^{\left( N\right) }\left( s\right) \right) ,Y^{\left( N\right)
}\left( s\right) -Y\left( s\right) \right\rangle _{H}%
\begin{array}{c}
\medskip \medskip \medskip 
\end{array}
\\
&\leq &\left\Vert \left( 0,\left( a_{n}^{1/2}\cos \left( nX^{(N)}\left( s\right)
\right) \right) _{n>N},-\left( a_{n}^{1/2}\sin \left( nX^{(N)}\left( s\right)
\right) \right) _{n>N}\right) \right\Vert _{H}%
\begin{array}{c}
\medskip \medskip \medskip 
\end{array}
\\
&&\times \left\Vert Y^{\left( N\right) }\left( s\right) -Y\left( s\right)
\right\Vert _{H}
\begin{array}{c}
\medskip \medskip \medskip
\end{array}
\\
&\leq &C\left\Vert \left( a_{n}^{1/2}\right) _{n>N}\right\Vert
_{l^{2}}^{2}+\left\Vert Y^{\left( N\right) }\left( s\right) -Y\left(
s\right) \right\Vert _{H}^{2}
\begin{array}{c}
\medskip \medskip \medskip 
\end{array}
\end{eqnarray*}
and, by arguing as in Proposition \ref{feller}, we have that 
\begin{eqnarray*}
&&\left\langle F\left( Y^{\left( N\right) }\left( s\right) \right) -F\left(
Y\left( s\right) \right) ,Y^{\left( N\right) }\left( s\right) -Y\left(
s\right) \right\rangle _{H}
\begin{array}{c}
\medskip \medskip \medskip 
\end{array}
\\
&\leq &C\left( y\right) \left( 1+s\right) \left\Vert Y^{\left( N\right)
}\left( s\right) -Y\left( s\right) \right\Vert _{H}^{2}.
\begin{array}{c}
\medskip \medskip \medskip
\end{array}
\end{eqnarray*}
where $C$ is a positive constant depending on $\left( a_{n}\right) _{n}$ and
also on the initial condition 
\medskip\medskip 
$y=\left( x,\left( u_{0}^{n}\right)_{n},\left( v_{0}^{n}\right) _{n}\right) $.

We obtain, for $0\leqslant t\leqslant T$, that
\begin{eqnarray*}
\left\Vert Y^{\left( N\right) }\left( t\right) -Y\left( t\right) \right\Vert
_{H}^{2} &\leq &\left\Vert \Pi _{N}y-y\right\Vert _{H}^{2}+Ct\left\Vert
\left( a_{n}^{1/2}\right) _{n>N}\right\Vert _{l^{2}}^{2}%
\begin{array}{c}
\medskip \medskip \medskip
\end{array}
\\
&&+C\left( y\right) \int_{0}^{t}\left( 1+s\right) \left\Vert Y^{\left(
N\right) }(s)-Y(s)\right\Vert _{H}^{2}ds.\\
\begin{array}{c}
\medskip \medskip
\end{array}
&\leq & \Vert\Pi_{N}y-y\Vert_{H}^{2}+CT\Vert
( a_{n}^{1/2}) _{n>N}\Vert_{l^{2}}^{2}\\
&&+C(y) (1+T)\int_{0}^{t} \Vert Y^{(
N) }(s)-Y(s)\Vert_{H}^{2}ds.
\end{eqnarray*}

By using Gronwall's lemma we deduce
\begin{equation*}
\left\Vert Y^{\left( N\right) }\left( t\right) -Y\left( t\right) \right\Vert
_{H}^{2}\leq \left( \left\Vert \Pi _{N}y-y\right\Vert _{H}^{2}+CT\left\Vert
\left( a_{n}^{1/2}\right)_{n>N}\right\Vert _{l^{2}}^{2}\right) e^{C(y,T)t}
\begin{array}{c}
\medskip \medskip \medskip
\end{array}
\end{equation*}
and since 
\begin{equation*}
\underset{N\rightarrow \infty }{\lim }\left\Vert \Pi _{N}y-y\right\Vert
_{H}^{2}=0
\begin{array}{c}
\medskip \medskip
\end{array}
\end{equation*}
and
\begin{equation*}
\underset{N\rightarrow \infty }{\lim }\left\Vert \left( a_{n}^{1/2}\right)
_{n>N}\right\Vert _{l^{2}}^{2}=0
\begin{array}{c}
\medskip 
\end{array}
\end{equation*}
we can conclude the proof of this result.
\end{proof}

\bigskip

\begin{proposition}
Under the assumptions presented above, the probability $\mu $ defined in (\ref{invar}) is an invariant measure of the transition semigroup $\left(
P_{t}\right) _{t}$ of (\ref{ecuatie}) on $H$.
\end{proposition}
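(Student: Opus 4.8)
The plan is to prove invariance of $\mu$ by passing to the limit in the finite-dimensional Galerkin approximations (\ref{aprox}), where invariance can be checked directly. First I would show that the measure
\[
\mu_{N}(dy)=\frac{dx}{2\pi}\otimes\prod_{n=1}^{N}N\left(0,\tfrac{1}{n^{2}}\right)du_{n}\otimes\prod_{n=1}^{N}N\left(0,\tfrac{1}{n^{2}}\right)dv_{n}
\]
on $H_{N}=\mathbb{R}/2\pi\mathbb{Z}\times\mathbb{R}^{N}\times\mathbb{R}^{N}$ is invariant for the transition semigroup $P_{t}^{(N)}$ associated with the finite-dimensional diffusion (\ref{aprox}). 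This is the heart of the argument and I expect it to be the main obstacle conceptually, though in finite dimension it is amenable to an explicit computation: the generator $\mathcal{L}_{N}$ of (\ref{aprox}) is $\mathcal{L}_{N}=\tfrac12\partial_{xx}+b_{N}\cdot\nabla$ with drift $b_{N}=\Pi_{N}F$; one checks that $\mathcal{L}_{N}^{*}\mu_{N}=0$, i.e.\ $\mu_{N}$ solves the stationary Fokker--Planck equation. Writing $\mu_{N}(dy)=\rho_{N}(y)\,dy$ with $\rho_{N}(y)\propto\exp(-\sum_{n=1}^{N}n^{2}(u_{n}^{2}+v_{n}^{2})/2)$, the identity $\mathcal{L}_{N}^{*}\mu_{N}=0$ reduces, after integration by parts, to $\mathrm{div}_{y}(b_{N}\rho_{N})=\tfrac12\partial_{xx}\rho_{N}=0$ (since $\rho_{N}$ does not depend on $x$), and the structure of $F$ in (\ref{operatorF}) --- the $u_{n}$, $v_{n}$ drifts are $\pm a_{n}^{1/2}\cos(nx)$, $\mp a_{n}^{1/2}\sin(nx)$, while the $x$-drift pairs $a_{n}^{1/2}n\sin(nx)$ against $u_{n}$ and $a_{n}^{1/2}n\cos(nx)$ against $v_{n}$ --- makes the divergence telescope to zero after using $\partial_{u_{n}}\rho_{N}=-n^{2}u_{n}\rho_{N}$, $\partial_{v_{n}}\rho_{N}=-n^{2}v_{n}\rho_{N}$. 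I would refer to \cite{CEGMB} for the finite-dimensional computation that motivates $\mu$. This gives $\int_{H_{N}}P_{t}^{(N)}\varphi\,d\mu_{N}=\int_{H_{N}}\varphi\,d\mu_{N}$ for all bounded continuous $\varphi$ on $H_{N}$.

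Next I would lift this to $H$. For $\varphi\in C_{b}(H)$ depending only on finitely many coordinates (a measure-determining class for $\mu$), the approximation identity reads
\[
\int_{H}\mathbb{E}\left[\varphi\left(\Pi_{N}Y^{(N)}(t)\right)\right]\mu(dy)=\int_{H}\varphi(y)\,\mu(dy)
\]
for $N$ large enough, after identifying the $H_{N}$-integral against $\mu_{N}$ with the $H$-integral against $\mu$ of a cylindrical function and using that $\Pi_{N}Y^{(N)}$ has law governed by $P_{t}^{(N)}$ with initial law $(\Pi_{N})_{*}\mu=\mu_{N}$ (here one uses that under $\mu$ the coordinates are independent, so conditioning on the first $N$ of them leaves the rest distributed as the tail product, which is integrated out by $\varphi$ being cylindrical). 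Then I would let $N\to\infty$: by Lemma \ref{CVGE}, $Y^{(N)}(t)\to Y(t)$ in $H$ almost surely (hence $\Pi_{N}Y^{(N)}(t)\to Y(t)$ as well, since $\Pi_{N}z\to z$ for every $z\in H$), so $\varphi(\Pi_{N}Y^{(N)}(t))\to\varphi(Y(t))$ a.s.\ by continuity of $\varphi$, and dominated convergence (using boundedness of $\varphi$ and, for the outer integral, that $\mu$ is a probability measure) yields
\[
\int_{H}P_{t}\varphi(y)\,\mu(dy)=\int_{H}\mathbb{E}[\varphi(Y(t))]\,\mu(dy)=\int_{H}\varphi(y)\,\mu(dy).
\]

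Finally, I would remove the cylindrical restriction on $\varphi$: cylindrical bounded continuous functions are dense enough to be measure-determining and, by the remark following (\ref{inv}) citing \cite{Billing} and \cite{DP}, it suffices to verify (\ref{inv}) on $C_{b}(H)$; a monotone-class / approximation argument (approximate a general $\varphi\in C_{b}(H)$ uniformly on balls by cylindrical functions and control the tails using tightness of the laws, which follows from the second-moment bound in Proposition \ref{exist} together with $\mu$ being a genuine probability measure on $H$) extends the identity from cylindrical $\varphi$ to all of $C_{b}(H)$, completing the proof. The delicate point to be careful about is the interchange of the $N\to\infty$ limit with the outer $\mu$-integral when the initial condition $y$ ranges over all of $H$: the Gronwall constant $C(y,T)$ in Lemma \ref{CVGE} depends on $y$ through $\|(u_{0}^{n})_{n}\|_{l^{2}}$ and $\|(v_{0}^{n})_{n}\|_{l^{2}}$, but since those norms are $\mu$-a.s.\ finite and the convergence $\|Y^{(N)}(t)-Y(t)\|_{H}^{2}\to0$ holds for every $\omega$ and every such $y$, the bounded-convergence theorem applies directly to the $\omega$-integral, and then again to the $y$-integral, with no uniformity needed.
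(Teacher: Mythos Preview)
Your argument is correct and follows the same strategy as the paper: invariance of $\mu_{N}$ for the Galerkin system (citing \cite{CEGMB}), then passage to the limit via Lemma~\ref{CVGE} and dominated convergence. The only organizational difference is that the paper avoids your restriction to cylindrical test functions by introducing the auxiliary measure
\[
\mu_{\infty}^{N}=\frac{dx}{2\pi}\otimes\prod_{n=1}^{N}N\Bigl(0,\tfrac{1}{n^{2}}\Bigr)du_{n}\otimes\prod_{n>N}\delta_{0}(du_{n})\otimes\prod_{n=1}^{N}N\Bigl(0,\tfrac{1}{n^{2}}\Bigr)dv_{n}\otimes\prod_{n>N}\delta_{0}(dv_{n})
\]
on $H$, proving $\mu_{\infty}^{N}\to\mu$ weakly, and then writing $\int_{H}P_{t}\varphi\,d\mu=\varepsilon_{N}+\int_{H}P_{t}^{N}\varphi(\Pi_{N}y)\,d\mu$ for \emph{every} $\varphi\in C_{b}(H)$; the second term equals $\int_{H}\varphi\,d\mu_{\infty}^{N}\to\int_{H}\varphi\,d\mu$, and $\varepsilon_{N}\to0$ by the same Lemma~\ref{CVGE}/DCT argument you use. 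This bookkeeping spares the final density step. Conversely, your route makes the role of the finite-dimensional identity more transparent, and your extension step is easily justified (cylindrical bounded continuous functions determine Borel probabilities on the separable Hilbert space $H$, so equality of integrals on them forces $P_{t}^{*}\mu=\mu$); the uniform-on-balls/tightness machinery you mention is not needed.
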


\begin{proof}
We define the measure 
\begin{eqnarray*}
\mu _{\infty }^{N}\left( dy\right) &=&\frac{dx}{2\pi }\otimes
\prod\limits_{n=1}^{N}N\left( 0,\frac{1}{n^{2}}\right) du_{n}\otimes
\prod\limits_{n>N}\delta _{0}\left( du_{n}\right) 
\begin{array}{c}
\medskip \medskip \medskip \medskip 
\end{array}
\\
&&\quad \quad \otimes \prod\limits_{n=1}^{N}N\left( 0,\frac{1}{n^{2}}
\right) dv_{n}\otimes \prod\limits_{n>N}\delta _{0}\left( dv_{n}\right) 
\begin{array}{c}
\medskip \medskip \medskip \medskip
\end{array}
\\
&&\overset{Denote}{=}\frac{dx}{2\pi }\otimes \mu ^{N}\left( d\left(
u_{n}\right) _{n=1}^{N}\right) \otimes \mu ^{N+}\left( d\left( u_{n}\right)
_{n}\right) 
\begin{array}{c}
\medskip \medskip \medskip \medskip
\end{array}
\\
&&\quad \quad \quad \quad \quad \otimes \mu ^{N}\left( d\left( v_{n}\right)
_{n=1}^{N}\right) \otimes \mu ^{N+}\left( d\left( v_{n}\right) _{n}\right),
\end{eqnarray*}
where $\delta _{0}$ is the Dirac measure on $\mathbb{R}$.

\bigskip

\textbf{Step I.}

We prove that 
\begin{equation*}
\mu _{\infty }^{N}\underset{N\rightarrow \infty }{\longrightarrow }\mu 
\end{equation*}
for the topology of weak convergence, i.e., 
$$\mu _{\infty }^{N}\varphi\underset{N\rightarrow \infty }{\longrightarrow }\mu\varphi,\quad \forall \varphi\in C_{b}(H). $$

Let $\varphi \in C_{b}\left( H\right) $ and denote by $\varphi _{N}=\varphi
\left( \Pi _{N}\right)$.

We can easily see that 
\begin{eqnarray*}
\int_{H}\varphi \left( y\right) \mu _{\infty }^{N}
&=&\int_{H_{N}}\int_{l^{2}\times l^{2}}\varphi \left( y^{N},y^{\prime
}\right) \mu ^{N}\left( dy^{N}\right) \mu ^{N+}\left( dy^{\prime }\right) 
\begin{array}{c}
\medskip \medskip \medskip
\end{array}
\\
&=&\int_{H_{N}}\varphi \left( y^{N},0,..,0,...\right) \mu ^{N}\left(
dy^{N}\right) 
\begin{array}{c}
\medskip 
\end{array}
\end{eqnarray*}
and that
\begin{equation*}
\int_{H_{N}}\varphi \left( y^{N},0,..,0,...\right) \mu ^{N}\left( dy^{N}\right)
=\int_{H}\varphi _{N}\left( y\right) \mu _{\infty }^{N}\left( dy\right)
=\int_{H}\varphi _{N}\left( y\right) \mu \left( dy\right) .
\end{equation*}

This leads to 
\begin{equation*}
\int_{H}\varphi \left( y\right) \mu _{\infty }^{N}=\int_{H}\varphi
_{N}\left( y\right) \mu \left( dy\right) .
\end{equation*}

Since 
\begin{equation*}
\underset{N\rightarrow \infty }{\lim }\Pi _{N}\left( y\right) =y
\end{equation*}
and keeping in mind that $\varphi $ is bounded continuous, we have via
Lebesgue dominated convergence theorem that
\begin{equation*}
\underset{N\rightarrow \infty }{\lim }\int_{H}\varphi \left( \Pi _{N}\left(
y\right) \right) \mu \left( dy\right) =\int_{H}\varphi \left( y\right) \mu
\left( dy\right),
\end{equation*}
and consequently
\begin{equation*}
\underset{N\rightarrow \infty }{\lim }\int_{H}\varphi \left( y\right) \mu
_{\infty }^{N}\left( dy\right) =\int_{H}\varphi \left( y\right) \mu \left(
dy\right),
\end{equation*}
i.e., 
\begin{equation*}
\mu _{\infty }^{N}\underset{N\rightarrow \infty }{\longrightarrow }\mu.
\end{equation*}

\bigskip

\textbf{Step II}.

We show that $\mu $ is an invariant measure for the transition semigroup.

Let $P_{t}^{N}$ be the transition semigroup corresponding to (\ref{aprox}).
We take
\begin{eqnarray}
\int_{H}P_{t}\varphi \left( y\right) \mu \left( dy\right) &=&\int_{H}\left(
P_{t}\varphi \left( y\right) -P_{t}^{N}\varphi \left( \Pi _{N}y\right)
\right) \mu \left( dy\right) 
\begin{array}{c}
\medskip \medskip \medskip
\end{array}
\label{diff} \\
&&+\int_{H}\left( P_{t}^{N}\varphi \left( \Pi _{N}y\right) \right) \mu
\left( dy\right) 
\begin{array}{c}
\medskip 
\end{array}
\notag \\
&&\overset{Denote}{=}\varepsilon _{N}+\int_{H}\left( P_{t}^{N}\varphi \left(
\Pi _{N}y\right) \right) \mu \left( dy\right) .
\begin{array}{c}
\medskip \medskip \medskip
\end{array}
\notag
\end{eqnarray}

By the same arguments developed in \cite{CEGMB} one can prove that $\mu ^{N}$ is an invariant measure for $P_{t}^{N}$. So we obtain that
\begin{eqnarray*}
\int_{H}\left( P_{t}^{N}\varphi \left( \Pi _{N}y\right) \right) \mu \left(
dy\right) &=&\int_{H_{N}}P_{t}^{N}\varphi \left( y^{N},0,..,0,...\right) \mu
^{N}\left( dy^{N}\right) 
\begin{array}{c}
\medskip 
\end{array}
\\
&=&\int_{H_{N}}\varphi \left( y^{N},0,..,0,...\right) \mu ^{N}\left( dy^{N}\right) 
\begin{array}{c}
\medskip
\end{array}
\\
&=&\int_{H}\varphi \left( y\right) \mu _{\infty }^{N}\left( dy\right).
\end{eqnarray*}

On the other hand we have that 
\begin{equation*}
P_{t}\varphi \left( y\right) -P_{t}^{N}\varphi \left( \Pi _{N}y\right)
\medskip \medskip =\mathbb{E}\left( \varphi \left( Y_{t}\right) -\varphi
_{N}\left( Y_{t}^{\left( N\right) }\right) \right) .
\end{equation*}

By Lemma \ref{CVGE}, we have
\begin{equation*}
Y_{t}^{\left( N\right) }\rightarrow Y_{t}\text{ a.s. }
\end{equation*}
for $N\rightarrow \infty ,$ and thus for $\varphi \in C_{b}\left( H\right)$, we obtain that
\begin{equation*}
\varphi(Y_{t}^{\left( N\right) })\rightarrow \varphi (Y_{t}) \text{ a.s. }.
\end{equation*}
  Because $\varphi (Y_{t}^{\left( N\right) })$ is bounded, this leads to
\begin{equation*}
P_{t}^{N}\varphi \left( \Pi _{N}y\right) \rightarrow P_{t}\varphi \left(
y\right) ,
\end{equation*}
for $N\rightarrow \infty$ by the Dominated Convergence Theorem. 

Since $Y_{t}^{\left( N\right) }$ is a Feller process, we have that $
P_{t}^{\left( N\right) }\varphi \in C_{b}\left( H\right) $ for all $\varphi
\in C_{b}\left( H\right) $, and we get via the Lebesgue Dominated Convergence Theorem
\begin{equation*}
\varepsilon _{N}=\int_{H}\left( P_{t}\varphi \left( y\right)
-P_{t}^{N}\varphi \left( \Pi _{N}y\right) \right) \mu \left( dy\right)
\rightarrow 0,
\end{equation*}
for $N\rightarrow \infty .$

Going back to (\ref{diff}) and passing to the limit for $N\rightarrow \infty 
$ we get that
\begin{equation*}
\int_{H}P_{t}\varphi \left( y\right) \mu \left( dy\right) =\underset{N\rightarrow \infty }{\lim }\int_{H}\varphi \left( y\right) \mu _{\infty
}^{N}\left( dy\right) =\int_{H}\varphi \left( y\right) \mu \left( dy\right) .
\begin{array}{c}
\medskip 
\end{array}
\end{equation*}

The existence of an invariant measure is now completely proved.
\end{proof}

\section{ On the uniqueness of the invariant measure }

In this section, we intend to give an important feature for the Kolmogorov operator $L$. Keeping in mind that $\sigma$ is the projection on the first coordinate, we have
\begin{eqnarray}\label{KolmogOp}
L\varphi (y) = \frac{1}{2}\partial _{xx}\varphi \left(
y\right) +\partial _{x}\varphi \left( y\right) \sum_{n}na_{n}^{1/2}\left(
v_{n}\cos \left( nx\right) +u_{n}\sin \left( nx\right) \right)\\
+\sum_{n}a_{n}^{1/2}\cos \left( nx\right) \partial
_{u_{n}}\varphi \left( y\right)
-\sum_{n}a_{n}^{1/2}\sin \left( nx\right) \partial
_{v_{n}}\varphi \left( y\right)\notag
\end{eqnarray}
for $\varphi\in C_{b}^{2}(H)$ the class of all bounded functions which are twice Fr\'echet differentiable and whose derivatives are bounded. 
 
We recall that the Kolmogorov operator associated to (\ref{ecuatie}) is obtained by using It\^{o} formula to function $\varphi$ in $C_{b}^{2}(H)$ (for details see Theorem 5.4.2 from page 72 of \cite{DPZ2}).

Set $S\varphi(y)=\frac{1}{2}\partial_{xx}\varphi(y)$ and $A\varphi(y):=L\varphi(y)-S\varphi(y)$.
\begin{lemma}\label{symetry}
For two functions $\psi$ and $\varphi$ in $C_{b}^{2}(H)$, we have
\begin{equation}\label{Sope}
\int_{H}S\varphi(y)\psi(y)\mu (dy)=\int_{H}\varphi(y)S\psi(y)\mu(dy)=-\frac{1}{2}\int_{H}\partial_{x}\varphi (y)\partial_{x}\psi (y)\mu (dy)
\end{equation} 
and 
\begin{equation}\label{Aope}
\int_{H}A\varphi(y)\psi(y)\mu (dy)=-\int_{H}\varphi(y)A\psi(y)\mu(dy)
\end{equation}
\end{lemma}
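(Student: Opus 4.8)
The plan is to verify both identities by direct computation, exploiting the explicit product structure of $\mu$ and integration by parts in each coordinate. For the operator $S\varphi(y)=\frac12\partial_{xx}\varphi(y)$, I would integrate by parts in the $x$-variable only. Writing $\mu(dy)=\frac{dx}{2\pi}\otimes\nu(du)\otimes\nu(dv)$ with $\nu=\prod_{n}N(0,1/n^2)$, and noting that $x$ ranges over $\mathbb{S}^1=\mathbb{R}/2\pi\mathbb{Z}$ so there are no boundary terms, I get
\begin{equation*}
\int_H \tfrac12\partial_{xx}\varphi\,\psi\,d\mu
=\int \Big(\int_{\mathbb{S}^1}\tfrac12\partial_{xx}\varphi(x,u,v)\,\psi(x,u,v)\,\tfrac{dx}{2\pi}\Big)\nu(du)\nu(dv)
=-\tfrac12\int_H\partial_x\varphi\,\partial_x\psi\,d\mu,
\end{equation*}
and a second integration by parts (or the symmetry of the last expression in $\varphi,\psi$) gives the equality with $\int_H\varphi\,S\psi\,d\mu$. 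This step is routine; the only things to check are that periodicity in $x$ kills the boundary term and that the $C_b^2$ hypothesis justifies differentiating under the integral sign and applying Fubini.

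For the first-order part $A=L-S$, I would split $A$ into its three groups of terms according to \eqref{KolmogOp}: the transport term $b(y)\,\partial_x\varphi$ with $b(y)=\sum_n na_n^{1/2}(v_n\cos(nx)+u_n\sin(nx))$, the $u_n$-derivative terms $a_n^{1/2}\cos(nx)\,\partial_{u_n}\varphi$, and the $v_n$-derivative terms $-a_n^{1/2}\sin(nx)\,\partial_{v_n}\varphi$. For each $n$, I integrate by parts in the corresponding variable. In the $u_n$-direction the density is the Gaussian $p_n(u_n)\propto e^{-n^2u_n^2/2}$, so $\int \partial_{u_n}\varphi\cdot g\,p_n\,du_n=-\int\varphi\,\partial_{u_n}(g p_n)\,du_n=\int\varphi(-\partial_{u_n}g + n^2 u_n g)p_n\,du_n$; since $g=a_n^{1/2}\cos(nx)$ does not depend on $u_n$, this produces $\int\varphi\, a_n^{1/2}\cos(nx)\, n^2 u_n\, p_n\,du_n$. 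Summing over $n$, the Gaussian-weight factors $n^2 u_n$ (resp. $n^2 v_n$) recombine with $a_n^{1/2}\cos(nx)$ (resp. $-a_n^{1/2}\sin(nx)$) to produce exactly the transport coefficient $b(y)$ hitting $\partial_x$: that is, the adjoint of the $u_n,v_n$-derivative part is minus the transport part, and by a symmetric computation the adjoint of the transport part (integrating by parts in $x$, using periodicity) is minus the $u_n,v_n$-derivative part. Hence $\int_H A\varphi\,\psi\,d\mu=-\int_H\varphi\,A\psi\,d\mu$, i.e. $A$ is skew-symmetric on $L^2(\mu)$, which is \eqref{Aope}. Combining, $L=S+A$ with $S$ symmetric and $A$ skew-symmetric, and in particular $\int_H L\varphi\,d\mu=0$ (taking $\psi\equiv 1$), reconfirming invariance of $\mu$.

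The main obstacle is analytic rather than algebraic: justifying the term-by-term integration by parts and the interchange of the infinite sum with the integral. One must check that the series defining $b(y)$ and the series of partial derivatives are $\mu$-integrable (here $a_n\in O^5$ and $\sum_n 1/n^2<\infty$ with the $N(0,1/n^2)$ weights make $\mathbb{E}_\mu|b(y)|<\infty$ and even $\mathbb{E}_\mu b(y)^2<\infty$ comfortably), that each Gaussian integration by parts has no boundary contribution (Gaussian tails versus the bounded $C_b^2$ test functions and their bounded derivatives), and that dominated convergence applies to pass the limit of partial sums inside. I would handle this by first proving the identities for functions $\varphi,\psi$ depending on only finitely many coordinates (cylindrical functions), where everything is a finite-dimensional computation with Gaussian and Lebesgue-on-$\mathbb{S}^1$ measures, and then extend to general $\varphi,\psi\in C_b^2(H)$ by approximating with $\varphi\circ\Pi_N$ and using boundedness of the first and second Fréchet derivatives together with $\Pi_N y\to y$ and dominated convergence, exactly in the spirit of the approximation argument already used in Lemma \ref{CVGE} and Step I of the previous proof.
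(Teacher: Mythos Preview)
Your overall strategy---integration by parts in $x$ on $\mathbb{S}^1$ for $S$ and the transport term, Gaussian integration by parts in each $u_n,v_n$ for the remaining first-order terms, then checking that the extra contributions cancel---is exactly what the paper does. The $S$-computation is fine as written.

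However, your description of the cancellation in the $A$-part is not quite right. You claim that the adjoint of the $u_n,v_n$-derivative block equals minus the transport block (and vice versa). It does not: the zeroth-order term coming from the Gaussian weight is $\sum_n n^{2}a_n^{1/2}\bigl(u_n\cos(nx)-v_n\sin(nx)\bigr)$, which is \emph{not} the transport coefficient $b(y)=\sum_n n\,a_n^{1/2}\bigl(v_n\cos(nx)+u_n\sin(nx)\bigr)$. (Also, in your Gaussian IBP you dropped the $\partial_{u_n}\psi$ contribution by taking $g=a_n^{1/2}\cos(nx)$ instead of $g=a_n^{1/2}\cos(nx)\,\psi$.) The correct picture, and what the paper carries out term by term, is this: IBP in $x$ on $b\,\partial_x\varphi\cdot\psi$ yields $-\varphi\,b\,\partial_x\psi$ plus the zeroth-order term $-\varphi\psi\,\partial_x b$, where $\partial_x b=\sum_n n^{2}a_n^{1/2}\bigl(u_n\cos(nx)-v_n\sin(nx)\bigr)$; Gaussian IBP on the $u_n,v_n$-derivative terms yields $-\varphi\,a_n^{1/2}\cos(nx)\,\partial_{u_n}\psi$ and $+\varphi\,a_n^{1/2}\sin(nx)\,\partial_{v_n}\psi$ plus the zeroth-order terms $+\varphi\psi\,n^{2}a_n^{1/2}u_n\cos(nx)$ and $-\varphi\psi\,n^{2}a_n^{1/2}v_n\sin(nx)$. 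The two families of zeroth-order terms cancel exactly, and the surviving first-order terms assemble into $-\int_H\varphi\,A\psi\,d\mu$. So each block's adjoint is minus \emph{itself} plus a multiplication term, and it is those multiplication terms that cancel across blocks.

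Once you correct this bookkeeping the argument goes through; your proposed cylindrical approximation for the analytic justification is a reasonable alternative to the paper's approach of first bounding $A\varphi(y)^2\leq C(1+\|u\|_{l^2}^2+\|v\|_{l^2}^2)$ and then computing term by term.
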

\begin{proof} Let $\varphi \in C_{b}^{2}(H)$. It is trivial that $\varphi,\, S\varphi\in L^{2}(H,\mu)$ by definition of $C_{b}^{2}(H)$. We shall start by proving that $A\varphi\in L^{2}(H,\mu)$.

From the definition of $A$, we have
\begin{eqnarray*}
A\varphi \left( y\right) &=&\partial _{x}\varphi (y) (\langle (na_{n}^{1/2}\cos(nx))_{n\geqslant 1} , v\rangle_{l^{2}} +\langle (na_{n}^{1/2}\sin(nx))_{n\geqslant 1} , u\rangle_{l^{2}} ) \\
\begin{array}{c}
\medskip \medskip \medskip
\end{array}
& & +\langle (a_{n}^{1/2}\cos(nx))_{n\geqslant 1} , \nabla_{u}\varphi (y)\rangle_{l^{2}} +\langle (-a_{n}^{1/2}\sin(nx))_{n\geqslant 1} , \nabla_{v}\varphi (y)\rangle_{l^{2}}
\end{eqnarray*}

Therefore, by using the inequality $(\sum_{j=1}^{n}x_{j})^{2}\leqslant n\sum_{j=1}^{n}x_{j}^{2}$ with $n=4$, we obtain
\begin{eqnarray*}
A\varphi \left( y\right)^{2} &\leqslant& 4\partial _{x}\varphi (y)^{2} (\langle (na_{n}^{1/2}\cos(nx))_{n\geqslant 1} , v\rangle_{l^{2}}^{2} +\langle (na_{n}^{1/2}\sin(nx))_{n\geqslant 1} , u\rangle_{l^{2}}^{2} ) \\
\begin{array}{c}
\medskip \medskip \medskip
\end{array}
& & +4\langle (a_{n}^{1/2}\cos(nx))_{n\geqslant 1} , \nabla_{u}\varphi (y)\rangle_{l^{2}}^{2} +4\langle (-a_{n}^{1/2}\sin(nx))_{n\geqslant 1} , \nabla_{v}\varphi (y)\rangle_{l^{2}}^{2}\\
\begin{array}{c}
\medskip 
\end{array}
&\leqslant& C(1+\Vert u\Vert_{l^{2}}^{2}+\Vert v\Vert_{l^{2}}^{2})
\end{eqnarray*}
where the last inequality is obtained by the Cauchy-Schwarz inequality and $C$ is a constant depending on $(a_{n})_{n}$ and on the upper bounds of the derivatives of $\varphi$. Hence $A\varphi\in L^{2}(H,\mu)$.

Since $\mu $ has $\mathbb{S}^{1}\times l^{2}\times l^{2}$ as support, we may
extend $\varphi $ to $\widetilde{H}=\mathbb{S}^{1}\times \mathbb{R}^{\mathbb{\infty }%
}\times \mathbb{R}^{\mathbb{\infty }}$ by the same expression.

Therefore 
\begin{eqnarray*}
\int_{H}S\varphi(y)\psi(y)\mu (dy)&=&\int_{\widetilde{H}}S\varphi(y)\psi(y)\mu (dy)\\
&=&\int_{\widetilde{H}}\frac{1}{2}\partial _{xx}\varphi \left( y\right)
\psi \left( y\right) \mu \left( dy\right) 
\begin{array}{c}
\medskip \medskip \medskip
\end{array}
\label{carl} \\
&=&\int_{\mathbb{R}^{\infty }\times \mathbb{R}^{\infty }}\int_{\mathbb{S}^{1}}\frac{1%
}{4\pi}\partial _{xx}\varphi \left( y\right) \psi \left( y\right) dx~N\left(
0,Q\right) \left( d\left( u_{n}\right) _{n}\right) ~N\left( 0,Q\right)
\left( d\left( v_{n}\right) _{n}\right) 
\begin{array}{c}
\medskip \medskip \medskip
\end{array}
\notag \\
&=&-\frac{1}{2}\int_{H}\partial_{x}\psi (y)\partial_{x}\varphi (y)\mu (dy),%
\begin{array}{c}
\medskip \medskip \medskip
\end{array}
\notag
\end{eqnarray*}
where $N\left( 0,Q\right) \left( d\left( u_{n}\right) _{n}\right)
=\prod\limits_{n\geq 1}N\left( 0,\frac{1}{n^{2}}\right) du_{n}$ and
similarly for $N\left( 0,Q\right) \left( d\left( v_{n}\right) _{n}\right) .$ This proves (\ref{Sope}). 

Furthermore
\begin{eqnarray*}
&&\int_{H}A\varphi \left( y\right) \psi \left( y\right) \mu \left(
dy\right) 
\begin{array}{c}
\medskip \medskip \medskip 
\end{array}
\\
&=&\int_{\widetilde{H}}A\varphi \left( y\right) \psi \left( y\right) \mu
\left( dy\right) 
\begin{array}{c}
\medskip \medskip \medskip
\end{array}
\\
&=&\int_{\widetilde{H}}\left( \partial _{x}\varphi \left( y\right) \sum_{n}na_{n}^{1/2}\left(
v_{n}\cos \left( nx\right) +u_{n}\sin \left( nx\right) \right) \right)
\psi \left( y\right) \mu \left( dy\right) 
\begin{array}{c}
\medskip \medskip \medskip 
\end{array}
\\
&&+\int_{\widetilde{H}}\sum_{n}a_{n}^{1/2}\cos \left( nx\right) \partial
_{u_{n}}\varphi \left( y\right) \psi \left( y\right) \mu \left( dy\right) 
\begin{array}{c}
\medskip \medskip \medskip
\end{array}
\\
&&-\int_{\widetilde{H}}\sum_{n}a_{n}^{1/2}\sin \left( nx\right) \partial
_{v_{n}}\varphi \left( y\right) \psi \left( y\right) \mu \left( dy\right).
\begin{array}{c}
\medskip \medskip \medskip
\end{array}
\end{eqnarray*}

For the term
\begin{equation*}
\int_{\widetilde{H}}\sum_{n}na_{n}^{1/2}v_{n}\cos \left( nx\right) \partial
_{x}\varphi \left( y\right) \psi \left( y\right) \mu \left( dy\right)
\end{equation*}
we compute 
\begin{equation*}
\int_{\mathbb{S}^{1}}\cos \left( nx\right) \psi \left( y\right) \partial
_{x}\varphi \left( y\right) dx=-\int_{\mathbb{S}^{1}}\partial _{x}\left( \psi
\left( y\right) \cos \left( nx\right) \right) \varphi \left( y\right) dx
\end{equation*}
\begin{equation}
=-\int_{\mathbb{S}^{1}}\partial _{x}\left( \psi \left( y\right) \right) \cos
\left( nx\right) \varphi \left( y\right) dx+\int_{\mathbb{S}^{1}}n\sin \left(
nx\right) \psi\left( y\right)\varphi \left( y\right) dx  \label{carl2}
\end{equation}
and then we get 
\begin{eqnarray*}
& &\int_{\widetilde{H}}\sum_{n}na_{n}^{1/2}v_{n}\cos \left( nx\right)
\partial _{x}\varphi \left( y\right) \psi \left( y\right) \mu \left(
dy\right)\\
 &=&\int_{\widetilde{H}}\sum_{n}n^{2}a_{n}^{1/2}v_{n}\sin \left( nx\right)
\varphi \left( y\right) \psi \left( y\right) \mu \left(
dy\right)\\
& & -\int_{\widetilde{H}}\sum_{n}na_{n}^{1/2}v_{n}\cos \left( nx\right)
\partial _{x}\psi \left( y\right) \varphi \left( y\right) \mu \left(
dy\right)
\end{eqnarray*}

Moreover for the term 
\begin{equation*}
-\int_{\widetilde{H}}\sum_{n}a_{n}^{1/2}\sin \left( nx\right) \partial
_{v_{n}}\varphi \left( y\right) \psi \left( y\right) \mu \left( dy\right)
\end{equation*}
we have
\begin{eqnarray}
&&\int_{\mathbb{R}}\partial _{v_{n}}\varphi \left( y\right) \psi \left(
y\right) e^{-\frac{n^{2}}{2} v_{n} ^{2}}dv_{n}%
\begin{array}{c}
\medskip \medskip \medskip
\end{array}
\label{carl3} \\
&=&-\int_{\mathbb{R}}\varphi \left( y\right) \partial _{v_{n}}\left( \psi
\left( y\right) e^{-\frac{n^{2}}{2} v_{n}^{2}}\right) dv_{n}
\begin{array}{c}
\medskip \medskip \medskip
\end{array}
\notag \\
&=&-\int_{\mathbb{R}}\varphi \left( y\right) \partial _{v_{n}}\left( \psi
\left( y\right) \right) e^{-\frac{n^{2}}{2} v_{n} ^{2}}dv_{n}
\begin{array}{c}
\medskip \medskip \medskip
\end{array}
\notag \\
&&\quad \quad \quad \quad +\int_{\mathbb{R}}\varphi \left( y\right)\psi \left( y\right)
n^{2}v_{n}e^{-\frac{n^{2}}{2} v_{n} ^{2}}dv_{n},
\begin{array}{c}
\medskip \medskip \medskip
\end{array}
\notag
\end{eqnarray}
which yields 
\begin{eqnarray*}
&&-\int_{\widetilde{H}}\sum_{n}a_{n}^{1/2}\sin \left( nx\right) \partial
_{v_{n}}\varphi \left( y\right) \psi \left( y\right) \mu \left( dy\right) 
\begin{array}{c}
\medskip \medskip \medskip
\end{array}
\\
&=& \int_{\widetilde{H}}\sum_{n}a_{n}^{1/2}\sin \left( nx\right) \partial
_{v_{n}}\psi \left( y\right) \varphi \left( y\right) \mu \left( dy\right)
\begin{array}{c}
\medskip \medskip \medskip
\end{array}
\\
&& - \int_{\widetilde{H}}\sum_{n}n^{2}a_{n}^{1/2}v_{n}\sin \left( nx\right)
\psi \left( y\right) \varphi \left( y\right) \mu \left(
dy\right)
\end{eqnarray*}

Similarly to (\ref{carl2}) and (\ref{carl3}) we get 
\begin{eqnarray}
&&\int_{\widetilde{H}}\sum_{n}na_{n}^{1/2}u_{n}\sin \left( nx\right)
\partial _{x}\varphi \left( y\right) \psi \left( y\right) \mu \left(
dy\right) 
\begin{array}{c}
\medskip \medskip \medskip
\end{array}
\label{carl4} \notag\\
&=& -\int_{\widetilde{H}}\sum_{n}n^{2}a_{n}^{1/2}u_{n}\cos \left( nx\right)
\varphi \left( y\right) \psi \left( y\right) \mu \left(
dy\right)\\
& & -\int_{\widetilde{H}}\sum_{n}n a_{n}^{1/2}u_{n}\sin \left( nx\right)
\partial _{x}\psi \left( y\right) \varphi \left( y\right) \mu \left(
dy\right)\notag
\end{eqnarray}
and 
\begin{eqnarray}
&&\int_{\widetilde{H}}\sum_{n}a_{n}^{1/2}\cos \left( nx\right) \partial
_{u_{n}}\varphi \left( y\right) \psi \left( y\right) \mu \left( dy\right) 
\begin{array}{c}
\medskip \medskip \medskip
\end{array}
\label{carl5}\notag \\
&=& \int_{\widetilde{H}}\sum_{n}n^{2}a_{n}^{1/2}u_{n}\cos \left( nx\right) 
\varphi \left( y\right) \psi \left( y\right) \mu \left( dy\right)
\begin{array}{c}
\medskip \medskip \medskip
\end{array}
\\
&&- \int_{\widetilde{H}}\sum_{n}a_{n}^{1/2}\cos \left( nx\right) \partial
_{u_{n}}\psi \left( y\right) \varphi \left( y\right) \mu \left( dy\right)\notag
\end{eqnarray}

Putting (\ref{carl}) to (\ref{carl5}) altogether gives (\ref{Aope}).
\end{proof}

An easy consequence of the result above is the following.
\begin{corollary}\label{c1}
For a function $\varphi \in C_{b}^{2}(H) $,
we have 

\begin{equation*}
\int_{H}L\varphi \left( y\right) \varphi \left( y\right) \mu \left(
dy\right) =-\frac{1}{2}\int_{H}\left\vert \partial _{x}\varphi \left(
y\right) \right\vert ^{2}\mu \left( dy\right) .
\end{equation*}
Furthermore, if $\varphi$ is such that $L\varphi =0$, then $\varphi $ is
constant on $H$. 
\end{corollary}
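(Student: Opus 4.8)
The plan is to read off the first identity from Lemma \ref{symetry} and then bootstrap from it. Since $L = S + A$, taking $\psi = \varphi$ in (\ref{Aope}) gives $\int_H A\varphi\,\varphi\,d\mu = -\int_H \varphi\,A\varphi\,d\mu$, so this integral vanishes; taking $\psi = \varphi$ in (\ref{Sope}) gives $\int_H S\varphi\,\varphi\,d\mu = -\tfrac12\int_H|\partial_x\varphi|^2\,d\mu$. Adding the two, and recalling that $A\varphi,\varphi\in L^2(H,\mu)$ was established in the proof of Lemma \ref{symetry}, yields
\[
\int_H L\varphi(y)\,\varphi(y)\,\mu(dy) = -\frac12\int_H |\partial_x\varphi(y)|^2\,\mu(dy),
\]
which is the first assertion.

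Now assume $L\varphi = 0$. The identity just proved forces $\int_H|\partial_x\varphi|^2\,d\mu = 0$, hence $\partial_x\varphi = 0$ $\mu$-a.e. I would upgrade this to $\partial_x\varphi \equiv 0$ on $H$ using that $\partial_x\varphi$ is continuous (as $\varphi\in C_b^2(H)$) and that $\mu$ has full topological support on $H = \mathbb{S}^1\times l^2\times l^2$: the first marginal is Lebesgue on $\mathbb{S}^1$, and each factor $\prod_{n\geqslant 1}N(0,n^{-2})$ is a centred Gaussian measure on $l^2$ whose support is the closure of its Cameron--Martin space $O^1$, which is all of $l^2$ since $O^1$ contains the dense set of finitely supported sequences. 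In particular $S\varphi\equiv 0$, so $0 = L\varphi = A\varphi$, and inserting $\partial_x\varphi = 0$ into the formula for $A\varphi$ from the proof of Lemma \ref{symetry} leaves
\[
\sum_{n\geqslant 1}a_n^{1/2}\bigl(\cos(nx)\,\partial_{u_n}\varphi(y) - \sin(nx)\,\partial_{v_n}\varphi(y)\bigr) = 0
\]
for $\mu$-a.e. $y = (x,u,v)$.

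To exploit this I would freeze $(u,v)$: since $\varphi$ is now independent of $x$, by Fubini for $\mu$-a.e. $(u,v)$ the left-hand side is a trigonometric series in $x$ that converges absolutely and uniformly (the derivatives of $\varphi$ are bounded and $(a_n^{1/2})_n$ is summable, as $(a_n)_n\in O^5$) and vanishes for a.e., hence every, $x$. Integrating against $\cos(mx)$ and $\sin(mx)$ over $\mathbb{S}^1$ term by term gives $a_m^{1/2}\partial_{u_m}\varphi = 0$ and $a_m^{1/2}\partial_{v_m}\varphi = 0$ for every $m$; since $a_m > 0$ this yields $\partial_{u_m}\varphi = \partial_{v_m}\varphi = 0$ at such points, and, once more by continuity and full support, everywhere on $H$. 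Thus every coordinate derivative of $\varphi$ vanishes identically, so the Fréchet derivative $D\varphi$ is identically zero; as $H$ is connected, $\varphi$ is constant.

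The first identity is routine. The delicate points, and where I expect to spend the real effort, are the passages from $\mu$-almost-everywhere to everywhere: these rest on the full-support property of the infinite-dimensional Gaussian factors (a Cameron--Martin/closure argument), a Fubini step to freeze the $l^2$-coordinates, and the uniform convergence of the trigonometric series that lets one read off Fourier coefficients. I would expect the support argument for $\prod_{n}N(0,n^{-2})$ on $l^2$ to be the main thing to pin down carefully.
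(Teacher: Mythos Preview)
Your proof is correct and follows essentially the same route as the paper: antisymmetry of $A$ kills the $A$-part, the $S$-part gives the Dirichlet form, full support plus continuity upgrades $\partial_x\varphi=0$ from a.e.\ to everywhere, and then orthogonality of $\{\cos(nx),\sin(nx)\}$ forces all remaining partials to vanish. You supply more detail than the paper on why $\mu$ has full support (the Cameron--Martin argument) and on the Fourier-coefficient extraction; one small simplification is that once $\partial_x\varphi\equiv 0$ you have $A\varphi=L\varphi=0$ \emph{pointwise} everywhere (not just $\mu$-a.e.), so the Fubini/a.e.-to-everywhere step for the trigonometric identity is unnecessary.
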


\begin{proof}
Let $\varphi \in C_{b}^{2}(H)$. By Lemma \ref{symetry}, we have
$$\int_{H}A\varphi(y)\varphi(y)\mu (dy)=-\int_{H}\varphi(y)A\varphi(y)\mu(dy); $$
hence $\int_{H}A\varphi(y)\varphi(y)\mu (dy)=0.$

Thus
\begin{eqnarray}\label{Lff}
\int_{H}L\varphi \left( y\right) \varphi \left( y\right) \mu \left(
dy\right) &=& \int_{H}S\varphi \left( y\right) \varphi \left( y\right) \mu \left(
dy\right)\notag\\
&=&-\frac{1}{2}\int_{H}\left\vert \partial _{x}\varphi \left(
y\right) \right\vert ^{2}\mu \left( dy\right).
\end{eqnarray}


Assume now that $\varphi$ satisfies $L\varphi =0$. Then, by (\ref{Lff}), we obtain
\begin{equation*}
0=-\frac{1}{2}\int_{H}\left\vert \partial _{x}\varphi \left(
y\right) \right\vert ^{2}\mu \left( dy\right) .
\begin{array}{c}
\medskip \medskip \medskip
\end{array}
\end{equation*}
Since $\mu $ has full support on $H$ and $\partial _{x}\varphi $ is
continuous, it follows that 
\begin{equation*}
\partial _{x}\varphi \equiv 0,
\end{equation*}
i.e., $\varphi $ is independent of the $x$ variable on $H.$

Therefore
\begin{eqnarray}
0 &=&L\varphi \left( x,\left( u_{n}\right) _{n},\left( v_{n}\right)
_{n}\right) 
\begin{array}{c}
\medskip \medskip 
\end{array}
\label{carl6} \\
&=&\sum_{n}a_{n}^{1/2}\cos \left( nx\right) \partial _{u_{n}}\varphi \left(
y\right) -\sum_{n}a_{n}^{1/2}\sin \left( nx\right) \partial _{v_{n}}\varphi
\left( y\right) 
\begin{array}{c}
\medskip \medskip 
\end{array}
\notag
\end{eqnarray}
for all $\left( x,\left( u_{n}\right) _{n},\left( v_{n}\right) _{n}\right)
\in H$.

Since $\left\{ \left( \cos nx\right) _{n},\left( \sin nx\right) _{n}\right\} 
$ forms an orthogonal basis of $L^{2}\left( \mathbb{S}^{1},dx\right)$, the relation (\ref{carl6}) forces to have 
\begin{equation*}
\partial _{u_{n}}\varphi =0=\partial _{v_{n}}\varphi ,\quad \text{for all }
n\geq 1
\end{equation*}
on $H,$\ because $a_{n}$ is supposed to be strictly positive.
Consequently, $\varphi $ is a constant on $H.$
\end{proof}

Set $L^{\ast}=S-A$. Then, by applying Lemma \ref{symetry}, one can check that 
\begin{equation*}
\int_{H}L\varphi(y)\psi(y)\mu (dy)=\int_{H}\varphi(y)L^{\ast}\psi(y)\mu(dy)
\end{equation*}
for all $\varphi,\psi\in C_{b}^{2}(H)$.

Let $\nu$ be any invariant probability measure of (\ref{sist}). 

We shall explain why we believe that $\nu$ should be identical to the measure $\mu$ defined in (\ref{invar}), which would prove uniqueness of the invariant probability, as well as the ergodicity of $\mu$.\\
By the Lebesgue's decomposition theorem, there exists a positive function $g\in L^{1}(H,\mu)$ and a measure $\nu_{s}$ which is singular to $\mu$, such that 
\begin{equation*}
\nu =g\mu + \nu_{s}.
\end{equation*}
Since $\mu$ and $\nu$ are both invariant for (\ref{sist}), it follows that $g\mu$ and $\nu_{s}$ are also invariant. We can now formulate the following result.
\begin{proposition} Assume that the function $g$ defined above lies in $C_{b}^{4}(H)$. Then $g$ is constant.
\end{proposition}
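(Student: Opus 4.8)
The plan is to turn the invariance of $g\mu$ into an infinitesimal identity for the Kolmogorov operator and then to run, word for word, the argument of Lemma \ref{symetry} and Corollary \ref{c1}, but with the adjoint operator $L^{\ast}=S-A$ in place of $L=S+A$. First I would establish
\[
\int_{H}L\varphi(y)\,g(y)\,\mu(dy)=0\qquad\text{for all }\varphi\in C_{b}^{2}(H).
\]
Since $g\mu$ is invariant, $\int_{H}(P_{t}\varphi-\varphi)\,g\,d\mu=0$ for every $t>0$, so after dividing by $t$ it suffices to pass to the limit $t\downarrow 0$ inside the integral. For $\varphi\in C_{b}^{2}(H)$ the It\^o formula (Theorem 5.4.2 of \cite{DPZ2}) gives $P_{t}\varphi(y)-\varphi(y)=\mathbb{E}\int_{0}^{t}L\varphi(Y_{s}^{y})\,ds$, so $t^{-1}(P_{t}\varphi(y)-\varphi(y))\to L\varphi(y)$ pointwise; moreover the explicit form (\ref{KolmogOp}) yields $|L\varphi(y)|\leqslant C(1+\|u\|_{l^{2}}+\|v\|_{l^{2}})$ (using $(na_{n}^{1/2})_{n}\in l^{2}$ and the boundedness of the derivatives of $\varphi$), which together with the a priori bounds (\ref{est1})--(\ref{est2}) gives, for $t\leqslant 1$, the domination $|t^{-1}(P_{t}\varphi(y)-\varphi(y))|\leqslant C(1+\|u\|_{l^{2}}+\|v\|_{l^{2}})$. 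As $g$ is bounded and $\int_{H}(1+\|u\|_{l^{2}}+\|v\|_{l^{2}})\,\mu(dy)<\infty$ (the $u$- and $v$-marginals of $\mu$ being countable products of centered Gaussians with $\sum_{n}n^{-2}<\infty$), dominated convergence yields the displayed identity.

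Next, applying the adjointness relation $\int_{H}L\varphi\,\psi\,d\mu=\int_{H}\varphi\,L^{\ast}\psi\,d\mu$ with $\psi=g\in C_{b}^{4}(H)\subset C_{b}^{2}(H)$, I get $\int_{H}\varphi\,L^{\ast}g\,d\mu=0$ for all $\varphi\in C_{b}^{2}(H)$. Taking $\varphi=g$ and writing $L^{\ast}g=Sg-Ag$, Lemma \ref{symetry} gives $\int_{H}g\,Ag\,d\mu=0$ (from (\ref{Aope}) with $\varphi=\psi=g$) and $\int_{H}g\,Sg\,d\mu=-\tfrac12\int_{H}|\partial_{x}g|^{2}\,d\mu$ (from (\ref{Sope})), so $\int_{H}|\partial_{x}g|^{2}\,d\mu=0$; since $\mu$ has full support and $\partial_{x}g$ is continuous, $\partial_{x}g\equiv 0$, and in particular $g$ and all its partial derivatives are $x$-independent. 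Then $L^{\ast}g=-Ag=-\sum_{n}a_{n}^{1/2}\cos(nx)\partial_{u_{n}}g+\sum_{n}a_{n}^{1/2}\sin(nx)\partial_{v_{n}}g$, which, because $(a_{n})_{n}\in O^{5}$ and $g\in C_{b}^{4}(H)$, again belongs to $C_{b}^{2}(H)$; choosing $\varphi=Ag$ in the identity $\int_{H}\varphi\,L^{\ast}g\,d\mu=0$ gives $\int_{H}|Ag|^{2}\,d\mu=0$, hence $Ag\equiv 0$ on $H$. Thus $\sum_{n}a_{n}^{1/2}\cos(nx)\partial_{u_{n}}g=\sum_{n}a_{n}^{1/2}\sin(nx)\partial_{v_{n}}g$ on $H$ with $x$-independent coefficients, and exactly as at the end of the proof of Corollary \ref{c1} the orthogonality of $\{(\cos nx)_{n},(\sin nx)_{n}\}$ in $L^{2}(\mathbb{S}^{1},dx)$ together with $a_{n}>0$ forces $\partial_{u_{n}}g=\partial_{v_{n}}g=0$ for every $n$. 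Combined with $\partial_{x}g\equiv 0$ this gives $Dg\equiv 0$, so $g$ is constant on the connected space $H$.

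I expect the only genuine obstacle to be the first step, namely justifying that one may differentiate the invariance identity under the integral sign: this is precisely where the moment estimates (\ref{est1})--(\ref{est2}) and the standing hypothesis $g\in C_{b}^{4}(H)$ (in particular that $g$ is bounded) are used. Once $\int_{H}L\varphi\,g\,d\mu=0$ is available, everything else is a line-by-line transcription of Lemma \ref{symetry} and Corollary \ref{c1}, now read for the adjoint operator $L^{\ast}=S-A$ instead of $L=S+A$.
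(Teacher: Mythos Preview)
Your proof is correct and follows essentially the same route as the paper's: use invariance of $g\mu$ to obtain the infinitesimal identity, combine it with Lemma~\ref{symetry} to get $\partial_{x}g\equiv 0$, then apply the identity once more (with $\varphi=Ag=-L^{\ast}g$) to force $Ag\equiv 0$ and conclude via the orthogonality argument of Corollary~\ref{c1}. The only differences are organizational: you first prove the universal relation $\int_{H}L\varphi\,g\,d\mu=0$ for all $\varphi\in C_{b}^{2}(H)$ and pass to the adjoint form $\int_{H}\varphi\,L^{\ast}g\,d\mu=0$, whereas the paper invokes invariance twice with the specific test functions $\varphi=g$ and $\varphi=L^{\ast}g$; and you supply a dominated-convergence justification (using (\ref{est1})--(\ref{est2})) for passing from $P_{t}$-invariance to the infinitesimal identity, a step the paper simply asserts.
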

\begin{proof} Since $g\mu$ is invariant, we obtain that 
\begin{eqnarray}
0=\int_{H}Lg(y) (g\mu)(dy)&=&\int_{H}Lg(y)g(y)\mu(dy) \label{star}\\
&=&-\frac{1}{2}\int_{H}\left\vert \partial _{x}g \left(
y\right) \right\vert ^{2}\mu \left( dy\right) \notag
\end{eqnarray}
by Corollary \ref{c1}.

Therefore we deduce that $\partial_{x}g\equiv 0$ by the continuity of $\partial_{x}g$ and the full support of $\mu$. Hence
\begin{eqnarray*}
Lg(y)&=&\sum_{n}a_{n}^{1/2}\cos \left( nx\right) \partial_{u_{n}}g \left( y\right)-\sum_{n}a_{n}^{1/2}\sin \left( nx\right) \partial_{v_{n}}g \left( y\right)\\
&=&-L^{\ast}g(y).
\end{eqnarray*}
By the Cauchy-Schwarz inequality and the definition of the space $C_{b}^{4}(H)$, it is clear that $Lg$ is bounded and consequently that $Lg\in C_{b}^{2}(H)$ as well as $L^{\ast}g$. Therefore, by application of (\ref{star}) with $L^{\ast}g$ in place of $g$, we get
\begin{eqnarray*}
0=\int_{H}L(L^{\ast}g)(y) (g\mu)(dy)&=&\int_{H}L(L^{\ast}g)(y)g(y)\mu(dy)\\
&=&\int_{H}(L^{\ast}g)^{2}(y)\mu(dy) ,
\end{eqnarray*}
which leads to $L^{\ast}g\equiv 0$ and so does $Lg$. By Corollary \ref{c1}, we get that $g$ is constant.
\end{proof}
 
A straightforward consequence is the following result.

\begin{corollary} If $\nu$ is absolutely continuous with respect to $\mu $ and such that its Radon-Nikodym derivative lies in $C_{b}^{4}(H)$, then $\nu=\mu$.
\end{corollary}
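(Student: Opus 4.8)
The plan is to deduce the corollary directly from the proposition just proved. Suppose $\nu$ is absolutely continuous with respect to $\mu$, say $\nu = g\mu$, and that the Radon--Nikodym derivative $g = d\nu/d\mu$ lies in $C_{b}^{4}(H)$. Since $\nu$ is a probability measure, $g$ is a nonnegative element of $L^{1}(H,\mu)$, so the Lebesgue decomposition of $\nu$ relative to $\mu$ is simply $\nu = g\mu$ with singular part $\nu_{s} = 0$; in particular the hypothesis ``$g \in C_{b}^{4}(H)$'' of the proposition is exactly what we have assumed here. Applying the proposition, $g$ is constant on $H$.

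Next I would pin down the value of that constant. Write $g \equiv c$ for some $c \geqslant 0$. Then $\nu = c\mu$, and since both $\nu$ and $\mu$ are probability measures on $H$, integrating $1$ against each gives $1 = \nu(H) = c\,\mu(H) = c$. Hence $c = 1$ and $\nu = \mu$, which is the claim.

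There is essentially no obstacle here: the corollary is a one-line consequence once the proposition is in hand, the only minor points being the identification of the Lebesgue decomposition with $\nu_{s}=0$ (so that the proposition applies verbatim) and the normalization argument fixing the constant to be $1$. The substantive content — that a $C_{b}^{4}$ density which is invariant for the semigroup must be constant — has already been carried out in the proof of the preceding proposition via the symmetry/antisymmetry identities of Lemma \ref{symetry} and the hypoellipticity-flavored argument in Corollary \ref{c1}.
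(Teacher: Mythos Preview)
Your proof is correct and is exactly the straightforward consequence the paper has in mind: the paper states the corollary without proof, simply noting it follows immediately from the proposition, and your argument (identify the Lebesgue decomposition as $\nu=g\mu$ with $\nu_s=0$, apply the proposition to get $g$ constant, then normalize) is precisely that deduction.
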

\begin{remark}
\begin{enumerate}
\item The proposition still holds true for $g\in C_{b}^{2}(H)$ since $L$ is well defined on $C_{b}^{2,1,1}(H)$, the set of bounded functions which are twice differentiable in $x$, and once differentiable in $u$ and $v$ and such that these partial derivatives are bounded.
\item If the function $g$ in the proposition has bounded support then $g\equiv 0$ and so $\nu$ is singular to $\mu$.
\end{enumerate}
\end{remark}

\section{Conclusion}

In this work, we aim to generalize the setting of \cite{CEGMB} to the infinite dimensional case, at least for the case of the unit circle. Since our non-linear operator $F$ is neither Lipschitz nor monotone we could not directly apply classic results in the sense that we had to prove some additional properties which hold for $F$.

As mentioned at the beginning of the Section \ref{s5}, we succeed to prove that a natural generalization of the invariant measure in the finite dimensional case was indeed an invariant measure in our setting.

However, we were not yet able to obtain its uniqueness, while in \cite{CEGMB} it is the case. This is due to the fact that we could not use H\"{o}rmander's like condition to get the strong Feller property which was the main argument in the finite dimensional case. So at this point, a first question is

\begin{flushleft}
1. Do we have uniqueness for the invariant measure?
\end{flushleft}

Thanks to Corollary \ref{c1}, we think that it might be the case.

If this is not true, a second open question would be 

\begin{flushleft}
2. Is $\mu$ an ergodic measure, which means that, if $A\in \mathcal{B}(H)$ is such that $P_{t}1_{A}=1_{A}$, then $\mu(A)\in\{ 0,1\}$?
\end{flushleft}

As mentioned above, the strong Feller property was proved in \cite{CEGMB}, while in our case, it does not hold (see remark \ref{noStrongFell}). On the other hand, the question of having asymptotically strong Feller property is still open (see paragraph 11 in \cite{hairer} for the definition). More precisely, in order to ensure that all our computations make sense, we had to choose our coefficients $(a_{n})_{n}$ in $O^{5}$; so the question can be formulated as 

\begin{flushleft}
3. If $(a_{n})_{n}\in \cap_{k\geqslant 1} O^{k}$ for example, do we have the asymptotic strong Feller property? If yes, can we weaken the assumption on the sequence $(a_{n})_{n}$?
\end{flushleft}

Finally, in the case of positive answer to this last question, the answer for the first will be positive since $\mu$ has full support.     

\section*{Acknowledgements}
The authors thanks the anonymous referee for his careful reading and its constructive comments and suggestions that improved the presentation of the paper.

\newpage

\textbf{Michel Bena\"im}, e-mail: \textit{michel.benaim(AT)unine.ch}\\
Institut de Math\'ematiques, Universit\'e de Neuch\^atel,\\
Rue \'Emile Argand 11, 2000 Neuch\^atel, Switzerland.\\

\textbf{Ioana Ciotir}, e-mail: \textit{ioana.ciotir(AT)unine.ch}\\
Institut de Math\'ematiques, Universit\'e de Neuch\^atel,\\
Rue \'Emile Argand 11, 2000 Neuch\^atel, Switzerland.\\

\underline{Present address:} Laboratoire de Math\'ematiques de l'INSA de Rouen, Avenue de l'Universit\'e, 76800 St Etienne du Rouvray, France.  \\
email: \textit{ioana.ciotir(AT)insa-rouen.fr} \\

\textbf{Carl-Erik Gauthier}, e-mail: \textit{carl-erik.gauthier(AT)unine.ch}\\
Institut de Math\'ematiques, Universit\'e de Neuch\^atel,\\
Rue \'Emile Argand 11, 2000 Neuch\^atel, Switzerland.\\

\begin{thebibliography}{99}
\bibitem{nonlin} Barbu, V.: \textit{Nonlinear Differential Equations of
Monotone Type in Banach Space}, Springer, Berlin (2010)

\bibitem{benaim} Bena\"{\i}m, M.: \textit{Vertex-reinforced random walks and
a conjecture of Pemantle}, Ann. Probab. 25, 361-392 (1997)

\bibitem{CEGMB} Bena\"{\i}m, M. Gauthier, C.E.: \textit{Self repelling diffusions on a Riemannian manifolds}. arXiv preprint math.PR, 1505.05664

\bibitem{benaim1} Bena\"{\i}m, M., Ledoux, M. and Raimond, O.: \textit{Self-interacting diffusions.} Probab. Theory Related Fields 122, 1-41 (2002)

\bibitem{benaim2} Bena\"{\i}m, M. and Raimond, O.: \textit{Self-interacting
diffusions II: Convergence in law} Ann. Inst. H. Poincar\'{e} 6, 1043-1055 (2003)

\bibitem{benaim3} Bena\"{\i}m, M. and Raimond, O.: \textit{Self-interacting
diffusions III: Symetric interactions}, Ann. of Probab. 33, no. 5, 1717-1759 (2005)

\bibitem{Billing} Billingsley, P.: \textit{Convergence of Probability Measures}, John Wiley \& Sons,Inc., Wiley Series in Probability and Statistics (1999)


\bibitem{cop.diac} Coppersmith, D., Diaconis, P.: \textit{Random walk with reinforcement} (Unpublished) (1978)

\bibitem{cranston} Cranston, M., Le Jan, Y.: \textit{Self-attracting diffusions : Two cas studies}, Math. Ann. 303, 87-93 (1995)

\bibitem{DP} Da Prato, G.: \textit{Kolmogorov Equations for Stochastic PDEs}, Springer (2004)

\bibitem{DPZ1} Da Prato, G., Zabczyk, J.: \textit{Stochastic Equations in
Infinite Dimensions}, Cambridge University Press (1992)

\bibitem{DPZ2} Da Prato, G. and Zabczyk, J.: \textit{Ergodicity for Infinite
Dimensional Systems}, London Mathematical Society Lecture Notes, Cambridge
University Press (1996)

\bibitem{davis} Davis, B.: \textit{Reinforced Random Walks}. Prob. Th. Rel.
Fields 84, 203-229 (1990)

\bibitem{DKMS12} Dolbeault, J., Klar, A,. Mouhot, C., Schmeiser, C.: \textit{Exponential rate of convergence to equilibrium for a model describing fiber lay-down processes}, Appl. Math. Res. Express, Issue 2, 165-175 (2013)

\bibitem{durrett} Durrett, R.T., Rogers, L.C.G.: \textit{Asymptotic behavior
of Brownian polymers}, Probab. Theory Related Fields 92 (3), 337-349 (1992)


\bibitem{hairer} Hairer, M.: \textit{Ergodic theory for Stochastic PDEs}, preprint (2008)

\bibitem{Kallenberg} Kallenberg, O., \textit{Foundations of Modern Probability}, Springer Verlag (1997)

\bibitem{MLD} Ledoux, M.: \textit{Probability in Banach space}, Springer (1991)

\bibitem{NRW} Norris, J.R., Rogers, L.C.G., Williams, D.: \textit{Self-avoiding random walk : a Brownian motion model with local time drift},
Probab. Theor. Related Fields 74 (2), 271-287 (1987)

\bibitem{permantle} Pemantle, R.: \textit{Random Processes with
Reinforcement}, MIT doctoral dissertation (1988)

\bibitem{concise} Prevot C., R\"{o}ckner M.: \textit{A concise course on
stochastic partial differential equations}, Monograph, Lectures Notes in
Mathematics, Springer (2006)

\bibitem{raimond} Raimond, O.: \textit{Self Attracting Diffusions : Case of
the constant interaction}, Probab. Theor. Relat. Fields 107, 177-196 (1996)

\bibitem{TTV} Tarr\`{e}s, P., T\'{o}th, B., Valk\'{o}, B.: \textit{Diffusivity bounds for 1d Brownian polymers}, Ann. Probab., vol. 40, 2, 437-891 (2012)

\end{thebibliography}
\end{document}